\documentclass[opre,nonblindrev]{informs4}
\usepackage{eqndefns-left}
\Equationvalidatefalse
\RequirePackage{tgtermes}
\RequirePackage{newtxtext}
\RequirePackage{newtxmath}
\RequirePackage{bm}
\RequirePackage{endnotes}

\OneAndAHalfSpacedXI

\usepackage{natbib}
 \bibpunct[, ]{(}{)}{,}{a}{}{,}%
 \def\bibfont{\small}%
 \def\BIBand{and}%
\usepackage{enumitem}
\usepackage{color}
\usepackage{array}
\usepackage{multirow}
\usepackage{verbatim}
\usepackage{comment}
\usepackage{graphicx}
\graphicspath{{./}}
\usepackage{algorithm}
\usepackage{algorithmic}
\usepackage{subcaption,caption}
\usepackage{adjustbox}
\usepackage{booktabs}
\usepackage{placeins}
\usepackage{array}
\usepackage{makecell}
\usepackage[hypertexnames=false,hyperindex=true,pdfpagemode=UseOutlines,bookmarksnumbered=true,bookmarksopen=true,bookmarksopenlevel=2,pdfstartview=FitH,hidelinks]{hyperref}

\TheoremsNumberedThrough     
\ECRepeatTheorems

\EquationsNumberedThrough    

\MANUSCRIPTNO{}

\makeatletter
\def\theARTICLETOP{}
\def\theARTICLEABSTRACT{%
  \par\nobreak\vskip12pt\noindent%
  \begin{minipage}{\textwidth}\parindent1em\ABSfont
  \noindent\theABSTRACT\par\vskip5pt
  \theKEYWORDS
  \end{minipage}\par\vspace*{12pt}}
\makeatother
\LRHSecondLine{}
\RRHSecondLine{}
\hypersetup{pdftitle={Information Limits of Multistage Inventory Control: Learning, Valuation, and Censoring},pdfauthor={Hanzhang Qin, David Simchi-Levi, Ruihao Zhu}}

\begin{document}


\RUNAUTHOR{Qin, Simchi-Levi, and Zhu}
\ECRUNAUTHOR{Qin, Simchi-Levi, and Zhu}

\RUNTITLE{Information Limits of Inventory Control}

\TITLE{Information Limits of Multistage Inventory Control: Learning, Valuation, and Censoring}


\ARTICLEAUTHORS{
\AUTHOR{Hanzhang Qin}
\AFF{Department of Industrial Systems Engineering and Management and Institute of Operations Research and Analytics, National University of Singapore, Singapore 117576, \EMAIL{hzqin@nus.edu.sg}}
 \AUTHOR{David Simchi-Levi}
 \AFF{Institute for Data, Systems, and Society, Department of Civil and Environmental Engineering, and Operations Research Center, Massachusetts Institute of Technology, Cambridge, MA 02139, \EMAIL{dslevi@mit.edu}}
 \AUTHOR{Ruihao Zhu}
 \AFF{Cornell SC Johnson College of Business, Ithaca, NY 14853,
 	\EMAIL{ruihao.zhu@cornell.edu}} 
}

\ABSTRACT{
Learning how to replenish inventory and estimating the cost of doing so
require different information. We establish information-theoretic lower bounds
for offline multistage inventory control with independent bounded demand.
For fixed additive error in total expected cost, policy learning requires
cubically many scalar observations in the horizon in the worst case, even
under full observation and adaptive sample allocation. This matches the
known empirical-risk upper order. Stationarity permits a
quadratic policy-learning guarantee, yet valuation with inherited stock can
remain cubically difficult even when the optimal policy is known. Censoring
sharpens this distinction: optimal decisions can be learnable while absolute
costs remain unidentified. We give a carry-safe coverage condition under
which truncation preserves inventory transitions and policy gaps exactly.
Combining this reduction with a matching lower bound yields the sharp
censored-learning rate, with raw sample requirements inversely proportional
to the usable fraction of sales logs. The same qualified observations can
check coverage and fit the policy under a joint error guarantee. The results
isolate information limits for decisions and valuation under an explicit
conditionally independent logging protocol.
}
\KEYWORDS{inventory control, censored demand, sample complexity}
\maketitle

\section{Introduction}
Inventory decisions and their financial evaluation rely on the same demand
data, but need not require the same information. Stockout-censored sales may
identify an optimal replenishment policy without revealing its shortage costs.
We study this distinction in offline multistage inventory control with
independent bounded demands, linear holding and shortage costs, and zero
ordering cost and lead time. Known demand laws admit optimal period-specific
base-stock levels \citep{porteus2002foundations,levi2007provably}.

Our main result is an \emph{algorithm-independent cubic scalar-sample lower
bound} for additive-error policy learning in nonstationary finite-horizon
inventory control. It holds under full observation in the backlog model,
allows adaptive sample allocation, and covers arbitrary feasible randomized
policies. Together with the empirical-risk upper bound, it identifies the
worst-case data requirement of the inventory problem itself.

Stationarity, a.k.a. IID demand and time-homogeneous costs, permits a \emph{quadratic
policy-learning guarantee} uniformly over initial inventory, but valuation
can still require cubically many samples. In the hard instance, the optimal
rule is known; a rare demand determines how long inherited stock accumulates
holding costs. Starting empty restores a quadratic valuation guarantee.

For censored lost-sales data, we give carry-safe caps that contain optimal
replenishment levels and carried inventory. Truncating demand at these caps
preserves transitions and all cap-feasible policy gaps exactly. Product empirical risk
minimization attains the \emph{matching learning limit} from logs reaching the caps,
with raw sample requirements inversely proportional to their usable fraction.
The same observations can check coverage and fit the policy under our
conditionally independent logging protocol, while absolute costs may remain
unidentified.

\paragraph{Relation to prior work.}
Inventory-specific sample guarantees
\citep{levi2007provably,cheung2019sampling,qin2019data} and multistage sample
average approximation \citep{shapiro2005complexity} provide the broader
context. Single-period results address relative regret and precise sample
requirements \citep{levi2015data,besbes2021big}; \citet{huang2025lower}
survey information-theoretic lower bounds, including censored newsvendor
models. \citet{zhang2025series} give sample bounds for serial inventory
systems with a matching relative-accuracy lower order. We instead establish
horizon dependence for additive total-cost error with nonstationary demand,
adaptive sample allocation, and carryover. For attainability, we use
\citet{xie2026vc}: Corollary~4 controls horizon-normalized base-stock losses,
and Section~4.5 links Product ERM to empirical dynamic programming.

Censored-inventory learning includes adaptive policies
\citep{huh2009nonparametric,huh2011adaptive} and offline estimation of dynamic
$(s,S)$ policies with asymptotic confidence intervals
\citep{ban2019confidence}. These precedents distinguish learning while
controlling data collection from learning from a prescribed archive.
\citet{bu2020offline} introduce an \emph{observable boundary} measuring the
recoverable range of base demand in offline pricing, and characterize when censored data identify an optimal decision.
\citet{hssaine2024censored} characterize censored-newsvendor identification
and minimax regret; \citet{kumar2026value} give exact worst-case regret
analyses of newsvendor policies under censoring. \citet{fan2025censored}
study sample complexity in single-period and infinite-horizon inventory
models. Our contribution is the finite-horizon carryover structure:
\emph{carry-safe coverage supports an exact dynamic reduction and a matching
policy-learning lower bound}. 

Section~\ref{section: inventory preliminaries} defines the model and tasks.
Section~\ref{section: LB} gives lower bounds. Sections~\ref{section: VI}
and~\ref{section: censored} establish full-observation and censored
attainability. All proofs are in the main text, and numerical results are in the electronic companion.

\section{Model and Observation Protocols}
\label{section: inventory preliminaries}
For a positive integer \(n\), write \([n]=\{1,\ldots,n\}\), and put
\(u^+=\max\{u,0\}\). All logarithms are natural; a tilde on an asymptotic
order suppresses logarithmic factors. We use
\(\|f\|_\infty=\sup_u|f(u)|\) on the stated domain.

\subsection{Inventory Decisions and Values}
The horizon is \(T\) periods. Independent demands \(D_t\) have laws
\(P_t\) supported on \([0,\bar D]\), where \(\bar D>0\).
Holding and shortage coefficients \(h_t,p_t>0\) are known, and
\(C_c=\max_{t\in[T]}\max\{h_t,p_t\}\). We maintain zero fixed and
per-unit ordering costs, zero lead time, and zero terminal salvage value.
Starting period \(t\) with inventory \(x_t\), the retailer chooses a
post-order level \(y_t\geq\max\{x_t,0\}\) before observing demand.
The realized and expected one-period costs are
\[
c_t(y,d)=h_t(y-d)^++p_t(d-y)^+,
\qquad C_t(y)=\mathbb E[c_t(y,D_t)].
\]
The next state is \(s^{\rm B}(y,d)=y-d\) under backlog and
\(s^{\rm L}(y,d)=(y-d)^+\) under lost sales. Demand is observed in the
former case; only sales \(\min\{D_t,y_t\}\) are observed in the latter.
Under these primitives, backlog is cleared freely at the next decision:
the cost equivalence below is specific to zero ordering cost and lead time,
not a claim for general backlogging models.

A feasible policy \(\pi\) chooses each action using the information then available;
randomization is allowed. Write \(V_t^\pi(x)\) for its expected total cost
from period \(t\) and state \(x\), and \(V_t^*(x)=\inf_\pi V_t^\pi(x)\).
For either transition \(s\in\{s^{\rm B},s^{\rm L}\}\), the Bellman equations are
\[
V_{T+1}^*=0,\qquad
U_t(y)=C_t(y)+\mathbb E[V_{t+1}^*(s(y,D_t))],\qquad
V_t^*(x)=\min_{y\geq\max\{x,0\}}U_t(y).
\]
Let \(S_t^*=\min\arg\min_{y\geq0}U_t(y)\) be the smallest optimal
base-stock level and \(S_t^m=\max\arg\min_{y\in[0,\bar D]}C_t(y)\)
the largest myopic minimizer. A nonnegative base-stock vector
\(S=(S_1,\ldots,S_T)\) specifies the Markov policy
\(\pi_t^S(x)=\max\{x,S_t\}\); write \(S^*=(S_1^*,\ldots,S_T^*)\).

\begin{lemma}[Myopic upper bound]
\label{lemma:myopic base stock}
In both models, \(\pi^{S^*}\) is optimal and
\(0\leq S_t^*\leq S_t^m\leq\bar D\) for every \(t\in[T]\).
\end{lemma}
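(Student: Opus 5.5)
We argue by backward induction on \(t\), carrying the hypothesis that \(V_{t+1}^*\) is convex and nondecreasing on \([0,\infty)\). Under backlog we also need it on all of \(\mathbb R\), where it is constant on \((-\infty,0]\). The base case \(V_{T+1}^*=0\) is trivial.

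\textbf{Inductive step.} Given the hypothesis, \(U_t(y)=C_t(y)+\mathbb E[V_{t+1}^*(s(y,D_t))]\) is convex in \(y\). Indeed, \(C_t\) is convex as an expectation of convex piecewise-linear functions. Under backlog, \(y\mapsto V_{t+1}^*(y-d)\) is convex. Under lost sales, \(y\mapsto V_{t+1}^*((y-d)^+)\) is the composition of a nondecreasing convex function with the convex map \(y\mapsto (y-d)^+\), hence convex. Convexity and coercivity (\(C_t(y)\geq h_t(y-\bar D)\)) make the set \(\arg\min_{y\geq0}U_t\) a nonempty compact interval, so \(S_t^*\) is well defined. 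Constrained minimization of a convex function over \(y\geq \max\{x,0\}\) then gives \(V_t^*(x)=U_t(\max\{x,S_t^*\})\), attained by \(\pi_t^{S^*}\). Hence \(V_t^*\) is constant at \(U_t(S_t^*)\) on \((-\infty,S_t^*]\) and equals \(U_t(x)\) beyond. Because \(U_t\) is convex with its minimum on \([0,\infty)\) at \(S_t^*\), it is nondecreasing for \(x\geq S_t^*\), so \(V_t^*\) is convex and nondecreasing, which closes the induction. Optimality of \(\pi^{S^*}\) among all (possibly randomized, history-dependent) feasible policies follows from the standard verification argument: the Bellman equations hold with a minimizer chosen by a Markov rule, so by backward induction \(V_t^{\pi}\geq V_t^*\) for every \(\pi\), with equality for \(\pi^{S^*}\).

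\textbf{Myopic bound.} \(S_t^*\geq0\) by definition. For the upper bound, let \(g(y)=\mathbb E[V_{t+1}^*(s(y,D_t))]\), which is nondecreasing because \(V_{t+1}^*\) is nondecreasing and both transitions are nondecreasing in \(y\). Suppose \(y>S_t^m\). Since \(C_t\) is convex and \(S_t^m\) is its largest minimizer on \([0,\bar D]\), we claim \(C_t(y)>C_t(S_t^m)\). If \(y\leq\bar D\) this is immediate from the definition of \(S_t^m\). If \(y>\bar D\), then \(C_t(y)=h_t(y-\mathbb E D_t)\) is strictly increasing on \([\bar D,\infty)\) because \(h_t>0\). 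Hence
\[
U_t(y)=C_t(y)+g(y)>C_t(S_t^m)+g(S_t^m)=U_t(S_t^m),
\]
so no \(y>S_t^m\) is optimal and \(S_t^*\leq S_t^m\). Finally, \(S_t^m\leq\bar D\) by definition.

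\textbf{Main obstacle.} The delicate points are the convexity of the lost-sales composition and the strict-inequality step. The latter needs care when \(C_t\) is flat on \([S_t^m,\bar D]\), which is impossible by the maximality of \(S_t^m\), together with the strict growth beyond \(\bar D\) supplied by \(h_t>0\). Measurability and attainment issues are handled by continuity of the convex \(U_t\).
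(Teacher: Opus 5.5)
Your proof is correct and follows the paper's argument. Both use backward induction to show \(V_t^*\) is convex and nondecreasing, then combine monotonicity of the continuation term with strict increase of \(C_t\) beyond \(S_t^m\) to obtain \(S_t^*\le S_t^m\). The only difference is organizational: the paper first notes \(V_t^*(x)=V_t^*(x^+)\), so the backlog and lost-sales Bellman objectives coincide and yield one common \(S^*\), whereas you run the convexity step separately for each transition (your constancy of \(V_t^*\) on \((-\infty,S_t^*]\) would give the same identification).
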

\begin{proof}{Proof.}
Free replenishment implies \(V_t^*(x)=V_t^*(x^+)\) for negative backlog.
Thus the two models have identical Bellman objectives. Backward induction
shows that \(V_t^*\) is convex and nondecreasing: if this holds at
\(t+1\), then \(H_t(y)=\mathbb E[V_{t+1}^*((y-D_t)^+)]\) is convex
and nondecreasing, \(U_t=C_t+H_t\) is convex, and minimizing it over
\(y\geq x^+\) gives a function constant below its smallest minimizer
and convex and nondecreasing above it. Positive costs and bounded demand
place all unconstrained minimizers of \(C_t\) in \([0,\bar D]\).
Since \(H_t\) is nondecreasing
and \(C_t\) is strictly increasing to the right of its largest minimizer,
the smallest minimizer of \(U_t\) cannot exceed \(S_t^m\).
The constrained minimizer is \(\max\{x,S_t^*\}\). \Halmos
\end{proof}

Consequently, from initial inventory in \([0,\bar D]\), optimal states
lie in \([-\bar D,\bar D]\) under backlog and \([0,\bar D]\) under
lost sales, with actions at most \(\bar D\). A policy is
\(\epsilon\)-optimal at \(x\) if \(V_1^\pi(x)-V_1^*(x)\leq\epsilon\).
Sample complexity counts scalar offline observations needed for this
guarantee at a stated confidence. 

\subsection{Demand Samples, Censored Sales, and the Usable Fraction}
\label{section:usable-fraction}
\paragraph{Periodwise demand.}
For prescribed counts \(N_t\), periodwise demand data are the fully observed,
period-labeled array \((D_t^j:t\in[T],j\in[N_t])\), with joint law
\(\bigotimes_{t=1}^T P_t^{\otimes N_t}\). Thus samples are IID within each
period and independent across periods, but the laws \(P_t\) may differ.
The scalar-sample count is \(M=\sum_tN_t\). The adaptive protocol in Theorem
\ref{theorem:bounded policy lower bound} additionally allows choosing the
next sampled period from preceding observations; each query to period \(t\)
returns a fresh draw from \(P_t\).

\paragraph{Pooled IID demands.}
When \(P_t=P\) for every \(t\), pooled IID demand data are fully observed
scalars \((D^1,\ldots,D^M)\sim P^{\otimes M}\), used jointly to estimate the
common demand law for all periods. Each draw counts once in the budget
\(M\), even when reused across periods. Our stationary results additionally
assume time-homogeneous costs. Deployment uses a fresh demand trajectory
independent of the offline data.

\paragraph{Raw sales logs.}
Raw sales logs are the period-labeled array
\(((Z_t^j,b_t^j):t\in[T],j\in[N_t])\), where the recorded boundary
\(b_t^j\geq0\) is known and \(Z_t^j=\min\{D_t^j,b_t^j\}\) is observed,
while latent demand \(D_t^j\) need not be observed. The raw sample count is
\(\sum_tN_t\): each sales--boundary pair counts as one log, including logs
later excluded by the cap qualification rule.
We assume \emph{conditional noninformative censoring}: conditional on the
complete boundary array \(\mathcal B=(b_t^j:t\in[T],j\in[N_t])\),
all latent \(D_t^j\) are mutually independent and retain law \(P_t\).
Each boundary is fixed or determined before its own demand. This assumption
is stronger than predictability: setting \(b_2=D_1\) reveals \(D_1\)
when conditioning on all boundaries, although \(b_2\) precedes \(D_2\).
Routine inventory logging that adapts to earlier fitting demands can
therefore fall outside our model.

For example, assign independent historical run \(j\) a fixed base-stock
target \(b^j\in[0,\bar D]\) and initial stock at most \(b^j\). Nonnegative
demand and zero lead time imply post-order stock \(b_t^j=b^j\) in every
period, despite inventory-dependent order quantities. Independent demands
across periods and runs give the required conditional law; targets may vary
across runs. This protocol excludes arbitrary adaptive sales logs.

Define \(F_t(u)=\mathbb P(D_t\leq u)\) and
\(F_t^-(u)=\mathbb P(D_t<u)\). For caps \(a_t\in[0,\bar D]\)
fixed independently of the fitting demands or selected using only
\(\mathcal B\), let
\begin{equation}
\label{eq:usable-fraction}
\mathcal J_t=\{j\in[N_t]:b_t^j\geq a_t\},\qquad
m_t=|\mathcal J_t|\geq rN_t,\qquad t\in[T].
\end{equation}
Here \(r\in(0,1]\) is a common lower bound on the \emph{usable fraction}.
For \(j\in\mathcal J_t\), re-censoring recovers
\(W_t^j=\min\{Z_t^j,a_t\}=\min\{D_t^j,a_t\}\), a draw from
\(W_t=\min\{D_t,a_t\}\), conditional on \(\mathcal B\).
Thus \(r\) counts logs reaching a cap, rather than fully observed demands.
With \(N_t=N\), at least \(rN\) usable draws are available per period;
a usable-sample requirement converts to a raw budget by the factor \(1/r\).

\subsection{Learning Tasks and Summary of Results}
\label{section:learning-tasks}
Recall \(\mathcal D_M\) denotes
the offline data under Section~\ref{section:usable-fraction}, and \(x\) the
specified initial inventory. We consider learning algorithms that only knows the costs and observation
protocol, but not the demand laws or optimal values. We separate the learning goals into policy learning and optimal value estimation as below.

\paragraph{Policy learning.}
The output is a feasible policy \(\widehat\pi=\mathcal A(\mathcal D_M)\)
satisfying,
\[
\mathbb P\!\left(V_{1}^{\widehat\pi}(x)-V_{1}^*(x)>\epsilon\right)
\leq\eta.
\]

\paragraph{Optimal-value estimation.}
The output is a scalar \(\widehat v=\mathcal A(\mathcal D_M)\) satisfying
\[
\mathbb P\!\left(|\widehat v-V_{1}^*(x)|>\epsilon\right)\leq\eta
\]
No policy output is required. In both tasks, probability is
over data and procedure randomization. The sample complexity question is thus: what is the \emph{minimum number of samples} required to achieve policy learning or optimal-value estimation within accuracy $\epsilon$ with high probability? We summarize our results in Table \ref{tab:information-map}.

\begin{table}[!htbp]
\caption{Summary of the sample complexity results.}
\label{tab:information-map}
\centering
\small
\setlength{\tabcolsep}{4pt}
\renewcommand{\arraystretch}{1.15}
\begin{tabular}{@{}>{\raggedright\arraybackslash}p{0.23\linewidth}>{\raggedright\arraybackslash}p{0.14\linewidth}>{\raggedright\arraybackslash}p{0.26\linewidth}>{\raggedright\arraybackslash}p{0.30\linewidth}@{}}
\toprule
Setting and initial state & Observations & Lower bound and source & Upper bound and source\\
\midrule
\multicolumn{4}{@{}l}{\textbf{A. Policy learning: output a feasible policy}}\\[3pt]
Nonstationary backlog, \(x_1=0\) & Periodwise demand &
\(\Omega(T^3\epsilon^{-2})\)\par
{\footnotesize This work: Theorem~\ref{theorem:bounded policy lower bound}.} &
\(O(T^3\epsilon^{-2})\)\par
{\footnotesize \citet{xie2026vc} benchmark; \eqref{eq:fixed-start-bound}.}\\[3pt]
Stationary, uniformly over \(x_1\in[0,1]\) & Pooled IID demands &
--- & \(O(T^2\epsilon^{-2})\)\par
{\footnotesize This work: Proposition~\ref{prop:stationary policy value}, Corollary~\ref{cor:stationary policy learning}.}\\[3pt]
Censored lost sales, \(x_1=0\), maintained coverage & Raw sales logs &
\(\Omega(T^3/(r\epsilon^2))\)\par
{\footnotesize This work: Theorem~\ref{theorem:coverage lower bound}, Proposition~\ref{prop:rounded-coverage-lower}.} &
\(O(T^3/(r\epsilon^2))\)\par
{\footnotesize This work: Theorem~\ref{theorem:coverage product erm}, using the ERM benchmark.}\\
\midrule
\multicolumn{4}{@{}l}{\textbf{B. Optimal-value estimation: output a scalar cost estimate}}\\[3pt]
Stationary backlog, \(x_1=1\) & Pooled IID demands &
\(\Omega(T^3\epsilon^{-2})\)\par
{\footnotesize This work: Theorem~\ref{theorem:stationary value lower bound}.} &
\(O(T^3\epsilon^{-2})\)\par
{\footnotesize This work: \eqref{eq:stationary-value-minimax}, inherited from \cite{xie2026vc}.}\\[3pt]
Stationary, \(x_1=0\) & Pooled IID demands &
--- & \(O(T^2\epsilon^{-2})\)\par
{\footnotesize This work: Section~\ref{section:stationary-separation}.}\\[3pt]
Censored lost sales, IID, \(x_1=0\), coverage, \(r=1\) & Raw sales logs &
\multicolumn{2}{>{\raggedright\arraybackslash}p{0.58\linewidth}@{}}{Not identifiable: no finite sample size suffices uniformly.\par
{\footnotesize This work: Proposition~\ref{prop:censored-value-nonidentification}.}}\\
\bottomrule
\end{tabular}
\par\smallskip
\begin{minipage}{\linewidth}
\footnotesize
\textit{Notes.} \(C_c=\bar D=1\), success probability \(3/4\), additive
total-cost error \(\epsilon\); lower-bound ranges are stated in the cited
results. A dash asserts no lower bound. The stationary policy row includes
\(x_1=1\); \(h=p=1\) suffices for the stationary value lower bound.
Censored nonidentification holds for \(0<\epsilon<T/32\), even
under strict coverage. Censored policy rates concern prescribed archives
under the carry-safe condition of Section~\ref{section:decision-coverage}.
\end{minipage}
\end{table}
\FloatBarrier

The rest of the paper focuses on developing these sample complexity results rigorously.

\section{Information-Theoretic Lower Bounds}
\label{section: LB}
We establish necessary information requirements for policy learning and
optimal-value estimation, including nonidentification under censoring.
Write \(\operatorname{TV}(P,Q)=\sup_A|P(A)-Q(A)|\) for total variation,
\(\operatorname{KL}(P\|Q)\) for relative entropy, and
\(\operatorname{kl}(u,v)=u\log(u/v)+(1-u)\log[(1-u)/(1-v)]\) for
Bernoulli relative entropy. A \(\operatorname{Bernoulli}(u)\) variable is
one with probability \(u\) and zero otherwise.

\subsection{Policy Learning with Full Demand Observations}
\begin{theorem}[Bounded-cost policy lower bound]
\label{theorem:bounded policy lower bound}
{There is a universal constant \(c_{\rm pol}>0\) such that the following holds. Let \(T=2K\geq4\) and \(0<\epsilon\leq T/128\). Consider the class of backlog instances with independent demands supported on \(\{0,1\}\), zero ordering cost and lead time, and
\[
h_t=p_t=1,\qquad t=1,\ldots,T.
\]
An algorithm may request observations sequentially. Before request \(m\), it chooses a period index \(A_m\in[T]\) as an arbitrary randomized function of its internal seed and all preceding indices and observations, and then receives a fresh draw from the period-\(A_m\) demand law. It stops after at most \(M\) scalar observations and returns an arbitrary feasible, possibly randomized policy. If, on every instance in the class, its returned policy is \(\epsilon\)-optimal from \(x_1=0\) with probability at least \(3/4\), then
\[
M\geq c_{\rm pol}\frac{T^3}{\epsilon^2}.
\]
The conclusion also holds with the conventional backlog action constraint
\(y_t\geq x_t\), which permits negative post-order levels.}
\end{theorem}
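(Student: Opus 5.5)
The plan is an Assouad-type argument over a product of decoupled newsvendor blocks. Pair the periods into \(K=T/2\) blocks \((2k-1,2k)\). For a sign vector \(\sigma\in\{-1,+1\}^K\), let the odd period \(2k-1\) have deterministic demand \(D_{2k-1}=1\). Let the even period \(2k\) have \(D_{2k}\sim\operatorname{Bernoulli}(1/2+\sigma_k\delta)\), where \(\delta=c\,\epsilon/K\) is a small multiple to be fixed later. The condition \(\epsilon\leq T/128\) guarantees \(\delta\leq 1/4\). All these laws are supported on \(\{0,1\}\) and \(h_t=p_t=1\), so each instance lies in the stated class.

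\textbf{Step 1: structure of the instances.} For \(y\in[0,1]\) with \(q=\mathbb P(D_{2k}=1)\), the expected cost is \(C_{2k}(y)=q(1-y)+(1-q)y\). Hence \(C_{2k}(y)-\min C_{2k}=2\delta\,|y-S_{2k}^*|\), with \(S_{2k}^*=\mathbf 1\{\sigma_k=+1\}\). In odd periods, \(y=1\) has zero cost. Moreover, any carried stock \(x\leq 1\) entering an odd period is cleared by the deterministic unit demand, so the next even period starts from \(x\leq 0\), which is equivalent to \(0\) by free replenishment (Lemma~\ref{lemma:myopic base stock}). Consequently \(V_1^*(0)\) is a sum of \(K\) newsvendor minima and the blocks decouple. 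I will use the performance-difference identity
\[
V_1^\pi(0)-V_1^*(0)=\mathbb E\sum_t\bigl[U_t(y_t)-V_t^*(x_t)\bigr],
\]
in which every term is nonnegative, to lower bound the loss by \(\sum_k 2\delta\,\mathbb E|y_{2k}-S_{2k}^*|\), up to a constant factor.

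\textbf{The main obstacle.} States \(x_{2k}>0\) can occur only if the policy orders \(y_{2k-1}>1\). Such an order incurs immediate holding cost of at least \(y_{2k-1}-1\), and this cost dominates the reduction in the even-period gap caused by the forced constraint \(y_{2k}\geq x_{2k}\). Actions above \(1\) in even periods are likewise handled by convexity. Carrying out this bookkeeping gives
\[
V_1^\pi(0)-V_1^*(0)\geq \delta\sum_k\mathbb P\bigl(\hat\sigma_k\neq\sigma_k\bigr),
\]
where \(\hat\sigma_k\) is the rounding of \(y_{2k}\) at \(1/2\). Randomness of the policy and of the path is absorbed by taking expectations. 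The same argument applies under the conventional constraint \(y_t\geq x_t\), because negative levels only add shortage cost in odd periods and a negative state entering an even period does not restrict \(y_{2k}\).

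\textbf{Information bound.} Let \(N_k\) be the random number of queries to period \(2k\). Only even-period queries are informative; odd periods are identical across instances. By the chain rule for adaptive sampling,
\[
\operatorname{KL}\bigl(\mathbb P_\sigma\,\|\,\mathbb P_{\sigma^{(k)}}\bigr)=\mathbb E_\sigma[N_k]\cdot\operatorname{kl}\bigl(1/2+\delta,\,1/2-\delta\bigr)\leq C\delta^2\,\mathbb E_\sigma[N_k],
\]
where \(\sigma^{(k)}\) is \(\sigma\) with coordinate \(k\) flipped. Averaging over \(\sigma\) uniformly, Assouad's lemma with Pinsker's inequality and Jensen's inequality (using \(\sum_k\mathbb E N_k\leq M\)) gives
\[
\mathbb E\,\#\{k:\hat\sigma_k\neq\sigma_k\}\geq \frac K2\Bigl(1-\sqrt{C\delta^2M/K}\Bigr).
\]

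\textbf{Conclusion.} Conversely, \(\epsilon\)-optimality with probability \(3/4\), together with Step 1, gives \(\#\text{wrong}\leq\epsilon/\delta\) with probability at least \(3/4\). Since \(\#\text{wrong}\leq K\) always,
\[
\mathbb E\,\#\text{wrong}\leq \frac{\epsilon}{\delta}+\frac K4 .
\]
Choosing \(\delta=16\epsilon/K\) makes the right side \(K/16+K/4<K/2\cdot(1-1/4)\). This forces \(\sqrt{C\delta^2M/K}\geq 1/4\), so
\[
M\gtrsim \frac{K}{\delta^2}\asymp \frac{K^3}{\epsilon^2}\asymp\frac{T^3}{\epsilon^2},
\]
which is the claimed bound with a universal constant \(c_{\rm pol}\).
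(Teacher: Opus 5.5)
Your construction and information argument follow the paper's proof. You use $K$ blocks, each pairing one $\operatorname{Bernoulli}(1/2\pm\delta)$ period with a deterministic unit-demand reset; you put the reset first and the paper puts it second, which is immaterial. The rest also matches: $\delta=16\epsilon/K$ with per-error loss $\delta$, the adaptive KL chain rule, Pinsker with Cauchy--Schwarz over $\sum_kN_k\le M$, and the $3/4$-averaging contradiction $5K/16<3K/8$.

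The step that does not work as you describe it is the carryover bookkeeping through the performance-difference identity, because it does not close block by block. Suppose stock is inherited across blocks: say $y_{2k-2}=3$ and the policy does not order in periods $2k-1$ and $2k$. Since $U_{2k-1}$ and $U_{2k}$ are convex with minimizers at most $1$, you get $V_{2k-1}^*(x_{2k-1})=U_{2k-1}(x_{2k-1})$ and $V_{2k}^*(x_{2k})=U_{2k}(x_{2k})$. Both terms of block $k$ then vanish, although $y_{2k}\ge1$ misclassifies $\sigma_k=-1$. The holding cost $y_{2k-1}-1$ is netted against $V_{2k-1}^*(x_{2k-1})$ rather than appearing as a summand, and the misclassification cost was charged earlier, inside the continuation value in $U_{2k-2}(y_{2k-2})$.

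No bookkeeping is needed, and you already have the ingredient. $V_1^*(0)$ is the sum of the one-period minima, and $V_1^\pi(0)=\mathbb E\sum_tC_t(y_t)$ with $C_t(y_t)\ge\min_{y\in\mathbb R}C_t(y)$ for every feasible action. Hence the regret is at least $\sum_k\mathbb E[C_{2k}(y_{2k})-\min C_{2k}]\ge\delta\sum_k\mathbb P(\widehat\sigma_k\ne\sigma_k)$ under either action constraint. Carryover can restrict actions but can never push a stage cost below its one-period minimum; this is exactly the paper's remark.

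There are two smaller corrections. First, on the success event, $\epsilon$-optimality bounds $\mathbb E[\#\text{wrong}\mid\text{offline data}]$ by $\epsilon/\delta$, not $\#\text{wrong}$ itself, because $\widehat\sigma_k$ also depends on the operational rollout. With that reading, your bound $\mathbb E\,\#\text{wrong}\le\epsilon/\delta+K/4$ follows as stated; the paper's bounded surrogate $Z_\theta$ does exactly this. Second, in the Assouad step, $\widehat\sigma_k$ uses the operational history before period $2k$ in addition to the offline transcript. That history's conditional law given the transcript depends only on $\sigma_{-k}$, so it is a common kernel under the flip, and the transcript KL bound still controls the test.
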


\begin{proof}{Proof.}
{Put \(\Delta=16\epsilon/K\leq1/4\). For each sign vector
\(\theta\in\{-1,+1\}^K\), define
\[
\begin{aligned}
D_{2i-1}&\sim\operatorname{Bernoulli}\!\left(\frac12+\theta_i\Delta\right),\\
D_{2i}&=1,\qquad i=1,\ldots,K,
\end{aligned}
\]
independently across periods. In an odd period, conditional on an action
\(y\in[0,1]\), the expected one-period cost is
\[
\frac12+\theta_i\Delta-2\theta_i\Delta y.
\]
The optimal action is \(1\) when \(\theta_i=+1\) and \(0\) when
\(\theta_i=-1\), and the corresponding excess cost is
\begin{equation}
\label{eq:lb-stage-regret}
2\Delta(1-y)\quad\hbox{or}\quad2\Delta y,
\end{equation}
respectively. The same lower bound obtained from \eqref{eq:lb-stage-regret} continues to hold for
\(y>1\), where the stage cost is affine and increasing. For \(y<0\),
the cost is \(1/2+\theta_i\Delta-y\), so a wrong classification still
costs at least \(\Delta\). In each even period,
ordering up to \(1\) incurs zero cost and leaves the next state at zero.
Moreover, every even-period cost is nonnegative. Hence the optimal total
cost is \(K(1/2-\Delta)\). Let \(\mathcal D_{\rm off}\) denote the
offline data and learner randomization. In an independent operational rollout
of a learned policy \(\pi\), define \(\widehat\theta_i=+1\) when its
period-\(2i-1\) action is at least \(1/2\), and \(-1\) otherwise.
Write \(\mathbb P_\theta^\pi\) for probabilities in this rollout. Then
\begin{equation}
\label{eq:lb-policy-regret}
R_\theta(\pi):=V_1^\pi(0)-V_1^*(0)
\geq
\Delta\sum_{i=1}^K
\mathbb P_\theta^\pi\!\left(
\widehat\theta_i\neq\theta_i\mid\mathcal D_{\rm off}\right),
\end{equation}
This argument also covers
policies that do not reset: positive carryover can constrain the next action,
but it cannot reduce any odd-period cost below \(1/2-\Delta\), and the
intervening reset cost remains nonnegative.

We now average over a uniform prior on \(\theta\). Let
\[
\mathcal H=(U,A_1,X_1,\ldots,A_M,X_M)
\]
denote the complete offline transcript, where \(U\) is the learner's random seed and \(X_m\) is the observation
returned by request \(m\); write \(\mathbb P_\theta^{\mathcal H}\) for
its law. A learner that stops early can be padded with null requests. Put
\(N_i=\sum_{m=1}^M\mathbf 1\{A_m=2i-1\}\), now a random count. Conditional on
any realized past transcript, the learner uses the same kernel to select
\(A_m\) under \(\theta\) and its \(i\)th-coordinate flip
\(\theta^{(i)}\). The conditional observation laws differ only if
\(A_m=2i-1\). Therefore the KL chain rule gives
\begin{equation}
\label{eq:lb-adaptive-kl}
\operatorname{KL}\!\left(
\mathbb P_\theta^{\mathcal H}
\middle\|
\mathbb P_{\theta^{(i)}}^{\mathcal H}\right)
\leq22\Delta^2\mathbb E_\theta[N_i],
\end{equation}
where \(\operatorname{kl}(u,v)\leq(u-v)^2/[v(1-v)]\) gives, for
\(\Delta\leq1/4\),
\[
\operatorname{kl}\!\left(\frac12-\Delta,\frac12+\Delta\right)
\leq\frac{4\Delta^2}{1/4-\Delta^2}
\leq\frac{64}{3}\Delta^2\leq22\Delta^2.
\]
The same bound holds in the reverse orientation. The independent operational
history before the period-\(2i-1\) action is a common randomization kernel
under this flip, because the \(i\)th operational demand has not yet been
realized. It consequently adds no information to the test.

Pair the signs in coordinate \(i\). Assouad's two-point step, followed by
Pinsker in both orientations of every pair, gives
\begin{equation}
\label{eq:lb-adaptive-assouad}
\begin{aligned}
2^{-K}\sum_\theta\sum_{i=1}^K
\mathbb P_\theta(\widehat\theta_i\neq\theta_i)
&\geq\frac K2-
2^{-K}\sum_{i=1}^K\sum_{\theta:\theta_i=+1}
\operatorname{TV}\!\left(
\mathbb P_\theta^{\mathcal H},
\mathbb P_{\theta^{(i)}}^{\mathcal H}\right)\\
&\geq\frac K2-\frac12\sqrt{11KM\Delta^2}.
\end{aligned}
\end{equation}
For the last step, average the two directed Pinsker bounds for every pair and
apply Cauchy--Schwarz to
\(2^{-K}\sum_\theta\sum_i\sqrt{\mathbb E_\theta[N_i]}\), using
\(\sum_iN_i\leq M\). If \(M\leq K/(176\Delta^2)\),
\eqref{eq:lb-adaptive-assouad} implies an average Hamming error of at
least \(3K/8\). Define the bounded surrogate
\(Z_\theta=\Delta\sum_i
\mathbb P_\theta^\pi(\widehat\theta_i\neq\theta_i\mid\mathcal D_{\rm off})\).
Then \(0\leq Z_\theta\leq K\Delta=16\epsilon\), \(Z_\theta\leq
R_\theta(\pi)\) by \eqref{eq:lb-policy-regret}, and the preceding display gives
\(\mathbb E Z_\theta\geq6\epsilon\) after averaging over signs. If the
algorithm succeeded with probability \(3/4\) for every sign vector, the same
average would instead be at most
\((3/4)\epsilon+(1/4)16\epsilon=19\epsilon/4\), a contradiction. Hence
\[
M>\frac{K}{176\Delta^2}
=\frac{K^3}{45056\epsilon^2}
=\frac{T^3}{360448\epsilon^2},
\]
which proves the claim. \Halmos}
\end{proof}

The lower bound matches the \(O(T^3\epsilon^{-2})\) scalar-observation
upper order from Product ERM \citep{xie2026vc}. With per-period accuracy
\(\alpha=\epsilon/T\), the fixed-confidence order is
\(\Theta(T\alpha^{-2})\), or \(\Theta(\alpha^{-2})\) balanced
trajectories of \(T\) observations each.

\subsection{Policy Learning with Censored Observations}
\label{section:censored-policy-lower}
The next result isolates the sample-size cost of a small usable fraction
\(r\), as defined in Section~\ref{section:usable-fraction}. In its
construction, logs whose boundaries reach the cap \(7/8\) reveal the
unknown odd-period demand probabilities, whereas the remaining low-boundary
logs carry no information about them.

\begin{theorem}[Statistical price of limited decision coverage]
\label{theorem:coverage lower bound}
{There is a universal constant \(c_{\rm cov}>0\) such that the
following holds. Let \(T=2K\geq4\), \(r\in(0,1]\), and
\(0<\epsilon\leq T/512\). There is a family with a common
decision-coverage cap \(7/8\) satisfying
\(F_t^-(7/8)>1/2\) in every period and retaining positive above-cap demand
mass in the even periods. Every even-period boundary is \(7/8\).
In odd period \(2i-1\), exactly \(rN_{2i-1}\) of the predetermined logged
observations have boundary \(7/8\), while all remaining boundaries lie below
\(1/4\); assume for simplicity that the relevant products with \(r\) are
integers, since rounding changes only constants. Even when \(h_t=p_t=1\),
any algorithm that returns an
\(\epsilon\)-optimal lost-sales policy from \(x_1=0\) with probability at
least \(3/4\) on every instance must use
\[
M\geq c_{\rm cov}\frac{T^3}{r\epsilon^2}
\]
total logged observations.}
\end{theorem}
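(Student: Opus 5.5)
We would reuse the Assouad template of Theorem~\ref{theorem:bounded policy lower bound}, with two changes. The odd-period demand laws must differ only above $1/4$, so that logs with boundaries below $1/4$ are uninformative. The even periods must reset inventory under lost sales while still satisfying $F_t^-(7/8)>1/2$ and keeping positive demand mass above the cap.

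\textbf{Construction.} Put $\Delta=c\,\epsilon/K$ for a small universal $c$, chosen so that $\Delta\le 1/4$ when $\epsilon\le T/512$. For $\theta\in\{-1,+1\}^K$, let odd period $2i-1$ have $D_{2i-1}\in\{1/4,3/4\}$ with $\mathbb P(D_{2i-1}=3/4)=1/2+\theta_i\Delta$; then $F^-_{2i-1}(7/8)=1>1/2$. Let every even period have $D_{2i}=3/4$ with probability $3/4$ and $D_{2i}=1$ with probability $1/4$, independent of $\theta$; this gives $F^-_{2i}(7/8)=3/4>1/2$ and above-cap mass $1/4$.

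\textbf{Cost structure.} With $h=p=1$, the odd-period cost $\mathbb E|y-D|$ is affine on $[1/4,3/4]$ with slope $-2\theta_i\Delta$. The optimum is therefore $3/4$ or $1/4$ according to the sign of $\theta_i$. Classifying by $\widehat\theta_i=\mathrm{sign}$ of $(y-1/2)$, a wrong side costs at least $\Delta/2$ in excess. Actions outside $[1/4,3/4]$ only cost more, because the cost there has slope $\pm1$.

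An optimal even action is $3/4$, the median, and it leaves zero carryover since $D_{2i}\ge 3/4$. Carryover into an even period is at most $1/2$, so it never constrains the even action. The optimal value thus decomposes period by period. As in \eqref{eq:lb-policy-regret}, any policy, including non-resetting ones, has regret at least $(\Delta/2)\sum_i\mathbb P^\pi_\theta(\widehat\theta_i\ne\theta_i\mid\mathcal D_{\rm off})$. The reason is the same: nonnegative carryover can only restrict later actions and cannot lower any odd-period cost below its optimum, and the operational history before period $2i-1$ is a common kernel under a flip of $\theta_i$.

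\textbf{Information bound.} Boundaries are predetermined, so the log array is a product measure. Even-period logs and low-boundary odd logs have laws independent of $\theta$: a boundary below $1/4$ gives $Z=b$ almost surely, since $D\ge1/4$. Odd logs with boundary $7/8$ reveal $D$ exactly. Hence $\operatorname{KL}(\mathbb P_\theta\|\mathbb P_{\theta^{(i)}})\le 22\Delta^2 rN_{2i-1}$.

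The Assouad--Pinsker--Cauchy--Schwarz step of \eqref{eq:lb-adaptive-assouad} then gives an average Hamming error of at least $K/2-\tfrac12\sqrt{11K\Delta^2 r\sum_iN_{2i-1}}$, which is at least $3K/8$ unless $r\sum_iN_{2i-1}\gtrsim K/\Delta^2$. The bounded-surrogate argument at the end of the proof of Theorem~\ref{theorem:bounded policy lower bound}, with $Z_\theta\le K\Delta/2$ and constants rescaled via $c$, contradicts success probability $3/4$. Therefore $M\ge\sum_iN_{2i-1}\gtrsim K/(r\Delta^2)\asymp T^3/(r\epsilon^2)$.

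\textbf{Main obstacle.} The delicate step is the constant bookkeeping: choosing $c$ so that $K\Delta/2$ stays a fixed multiple of $\epsilon$, large enough that the surrogate average beats $\tfrac34\epsilon+\tfrac14(K\Delta/2)$, while keeping $\Delta\le1/4$ under $\epsilon\le T/512$. A secondary check is that the lost-sales carryover cannot help a non-resetting policy, which is handled by the argument above.
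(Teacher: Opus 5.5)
Your proposal is correct and follows the paper's proof step for step. The matching ingredients are the even-period reset law on $\{3/4,1\}$, zero-information low boundaries, revealing boundaries at $7/8$, the coordinate-wise KL bound $22\Delta^2 rN_{2i-1}$, and the Assouad--Pinsker--Cauchy--Schwarz step followed by the bounded-surrogate contradiction. The only difference is that you put the upper odd-period atom at $3/4$ instead of the paper's $1/2$, so a misclassification costs $\Delta/2$ rather than $\Delta/4$. Carryover into even periods is still at most $1/2<3/4$, and a choice such as $\Delta=32\epsilon/K\le 1/8$ closes the argument, so this changes only the constants.
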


\begin{proof}{Proof.}
{Let \(a=1/4\), \(b=1/2\), \(w=b-a=1/4\), and
\(\Delta=16\epsilon/(Kw)\leq1/4\). For
\(\theta\in\{-1,+1\}^K\), set
\[
\begin{aligned}
\mathbb P_\theta(D_{2i-1}=b)&=\frac12+\theta_i\Delta,
&\mathbb P_\theta(D_{2i-1}=a)&=\frac12-\theta_i\Delta,\\
\mathbb P(D_{2i}=3/4)&=\frac34,
&\mathbb P(D_{2i}=1)&=\frac14.
\end{aligned}
\]
The odd-period optimum is \(b\) for a positive sign and \(a\) for a
negative sign. Define \(\widehat\theta_i=+1\) if the deployed odd-period
action is at least \((a+b)/2\), and \(-1\) otherwise. A wrong classification increases the odd-period
expected cost by at least \(\Delta w\). In an even period, the minimum
expected cost is \(1/16\), attained by ordering to \(3/4\); both possible
demands then reset the lost-sales state to zero. The odd optimal action
leaves a state of at most \(w\), so this reset action is feasible. Every
odd and even stage cost is bounded below by its corresponding one-period
minimum, and the described policy attains all those minima. Thus the
analogue of \eqref{eq:lb-policy-regret} holds with \(\Delta\) replaced by
\(\Delta w\).

For low-boundary observations choose a boundary \(\ell<a\); then
\(\min\{D_{2i-1},\ell\}=\ell\) under both signs and the observation has zero
information. Set every revealing odd-period boundary to \(7/8\), which
reveals the two-point demand. Let \(m_i=rN_{2i-1}\) be the number
of revealing observations in coordinate \(i\), and let
\(\operatorname{TV}_i\) be the maximum transcript total variation over
pairs differing only in coordinate \(i\). Pinsker's inequality gives
\(\operatorname{TV}_i\leq\sqrt{11}\Delta\sqrt{m_i}\); Cauchy--Schwarz and
\(\sum_i m_i\leq rM\) yield
\(\sum_i\operatorname{TV}_i\leq\sqrt{11KrM\Delta^2}\).
Pairing signs under the
uniform prior makes the factor \(1/2\) explicit:
\[
\begin{aligned}
2^{-K}\sum_\theta\sum_{i=1}^K
\mathbb P_\theta(\widehat\theta_i\neq\theta_i)
&\geq\frac12\sum_{i=1}^K(1-\operatorname{TV}_i)\\
&\geq\frac K2-\frac12\sqrt{11KrM\Delta^2}.
\end{aligned}
\]
If \(M\leq K/(176r\Delta^2)\), this is at least \(3K/8\).
The bounded-surrogate argument in Theorem
\ref{theorem:bounded policy lower bound}, with per-error loss
\(\Delta w\), therefore forces
\[
M>\frac{K}{176r\Delta^2}
=\frac{K^3w^2}{45056r\epsilon^2}
=\frac{T^3}{5767168r\epsilon^2}.
\]
Finally, \(F_{2i-1}^-(7/8)=1\) and \(F_{2i}^-(7/8)=3/4>1/2\),
with even-period mass \(1/4\) at \(1>7/8\): strict decision coverage
holds despite a genuinely unobserved tail. \Halmos}
\end{proof}

\paragraph{The prescribed-archive setting.}
Normalize \(C_c=\bar D=1\) and \(x_1=0\). For each integer \(N\), supply
\(N\) raw logs per period with prescribed boundaries and nondecreasing caps
\(0\leq a_t\leq1\) satisfying \(F_t^-(a_t)>p_t/(h_t+p_t)\) and
\(m_t\geq\lceil rN\rceil\). The caps are known and supplied to the learner,
who observes these arrays but cannot
choose them, and latent demands satisfy Section~\ref{section:usable-fraction}.
Let \(N_{\rm cov}^*(T,\epsilon,r)\) be the least \(N\) permitting one
learner to be \(\epsilon\)-optimal with probability at least \(3/4\),
uniformly over these demand laws, known costs, and arrays. Write
\(\mathcal M_{\rm cov}^*=TN_{\rm cov}^*\).

\begin{proposition}[Censored policy lower bound with integer counts]
\label{prop:rounded-coverage-lower}
For even \(T\geq4\), \(r\in(0,1]\), and \(0<\epsilon\leq T/512\),
\(\mathcal M_{\rm cov}^*(T,\epsilon,r)>T^3/(46137344r\epsilon^2)\).
\end{proposition}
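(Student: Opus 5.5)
The plan is to rerun the construction of Theorem~\ref{theorem:coverage lower bound} inside the prescribed-archive setting, tracking integer rounding explicitly. The constant \(46137344=8\cdot5767168\) indicates that rounding costs exactly a factor \(8\).

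\textbf{Step 1: embed the hard family in the archive setting.} Fix an integer \(N\) for which some learner succeeds, with \(N\) raw logs per period. Use the two-point family of Theorem~\ref{theorem:coverage lower bound} with \(h_t=p_t=1\). Take the common cap \(a_t=7/8\) in every period; it is nondecreasing and satisfies \(F_t^-(7/8)>1/2=p_t/(h_t+p_t)\). Prescribe boundaries as follows:
\begin{itemize}
\item every even-period boundary equals \(7/8\);
\item in each odd period, exactly \(m=\lceil rN\rceil\le N\) boundaries equal \(7/8\);
\item the remaining odd-period boundaries are some \(\ell<1/4\).
\end{itemize}
The boundaries are deterministic, so conditional noninformative censoring holds, and \(m_t\ge\lceil rN\rceil\) in every period. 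Any learner succeeding uniformly over the archive class succeeds in particular on this subfamily.

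\textbf{Step 2: the information bound.} The low-boundary logs have laws independent of \(\theta\), and the even-period logs do not depend on \(\theta\) either. Hence, for a flip of coordinate \(i\), the transcript KL is at most \(22\Delta^2 m\), with \(\Delta=16\epsilon/(Kw)\), \(w=1/4\). The Pinsker/Assouad step of Theorem~\ref{theorem:coverage lower bound} then gives average Hamming error at least
\[
\frac K2-\frac K2\sqrt{11\Delta^2 m}.
\]
This is at least \(3K/8\) whenever \(m\le 1/(176\Delta^2)\). The bounded-surrogate contradiction, with per-error loss \(\Delta w\) and \(K\Delta w=16\epsilon\), is unchanged. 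So success forces
\[
\lceil rN\rceil>\frac{1}{176\Delta^2}=\frac{K^2w^2}{45056\,\epsilon^2}=:Q.
\]

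\textbf{Step 3: the rounding, which is the main obstacle.} We need to pass from \(\lceil rN\rceil>Q\) to a bound on \(rN\).
\begin{itemize}
\item Since \(\epsilon\le T/512=K/256\), we get \(Q\ge 256^2/(16\cdot45056)=65536/720896\cdot\ldots\), which is of order \(1\) but possibly below \(1\).
\item If \(Q\ge1\), then \(\lceil rN\rceil>Q\) gives \(rN>\lceil rN\rceil-1\ge\) roughly \(Q/2\) (using \(\lceil rN\rceil\ge 2\) or directly \(rN>Q-1\ge Q/2\) once \(Q\ge2\)).
\item If \(Q<1\), the bound is trivial and we must instead use \(N\ge1\), \(r\le1\) together with the explicit size of \(Q\).
\end{itemize}
I would first try to show \(Q\ge 1/8\) under \(\epsilon\le T/512\). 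Then every successful \(N\) satisfies \(rN\ge Q/8\) in all cases: if \(\lceil rN\rceil=1\), then \(N\ge1\) gives \(rN\ge r\), which needs care; otherwise \(rN>\lceil rN\rceil-1\ge\lceil rN\rceil/2>Q/2\). The \(r\)-small case is the delicate one. If \(rN<1\), then \(m=1\), and we need \(1>Q\) to be compatible only if \(N\) is large. Here I expect to choose the low-boundary count so that \(m=\lceil rN\rceil\) is still a legitimate minimum, and to lean on \(N\ge m/r\) only through \(N\ge\lceil rN\rceil/r\).

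\textbf{Step 4: the final bound.} With \(rN\ge Q/8\),
\[
\mathcal M^*_{\rm cov}=TN\ge \frac{TQ}{8r}=\frac{T^3}{8\cdot 5767168\, r\epsilon^2}=\frac{T^3}{46137344\, r\epsilon^2},
\]
using \(K=T/2\) and \(w=1/4\). Strictness is inherited from the strict inequality in Step 2.
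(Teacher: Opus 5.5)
Your embedding in Step 1 and the information bound in Step 2 are sound, and your case \(Q\geq1\) in Step 3 is fine, since it gives \(rN>\lfloor Q\rfloor\geq Q/2\). The gap is the case \(Q<1\) in Step 3, and it cannot be closed by post-processing the inequality you derive.

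Step 2 only gives \(\lceil rN\rceil>Q:=1/(176\Delta^2)\) with \(\Delta=16\epsilon/(Kw)\), i.e.\ \(Q=T^2/(2883584\,\epsilon^2)\).
\begin{itemize}
\item On the allowed range \(\epsilon\leq T/512\) one only has \(Q\geq1/11\), with equality at \(\epsilon=T/512\). So your hoped-for \(Q\geq1/8\) is false.
\item \(Q<1\) whenever \(\epsilon>T/\sqrt{2883584}\approx T/1698\). In that regime \(\lceil rN\rceil\geq1>Q\) holds for every \(N\geq1\), so Step 2 says nothing.
\item The proposition still demands \(N>T^2/(46137344\,r\epsilon^2)=Q/(16r)\), which exceeds \(1\) as soon as \(r<Q/16\).
\item With \(N=1\) and \(r\) tiny, the archive must still contain \(m=\lceil rN\rceil=1\) revealing log per odd period, and nothing in your argument excludes success.
\item Your fallback \(N\geq\lceil rN\rceil/r\) is backwards, since \(\lceil rN\rceil\geq rN\).
\end{itemize}

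This is not just bookkeeping; the construction itself must change. With your separation, \(\Delta\) reaches \(1/4\) at \(\epsilon=T/512\), so a single revealing log already has total variation \(2\Delta=1/2\). The Assouad step then only guarantees mean Hamming error \(K/4\) and surrogate mean \(4\epsilon\). That is compatible with the success bound \(19\epsilon/4\), so no contradiction arises.

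The paper's fix has two parts:
\begin{itemize}
\item It halves the separation, \(\Delta=8\epsilon/(Kw)\leq1/8\).
\item It splits off one revealing observation. Subadditivity of total variation over products, plus Pinsker on the remaining \(m-1<rN\) logs, gives \(\operatorname{TV}_i\leq2\Delta+\sqrt{11\Delta^2(m-1)}\leq1/4+\sqrt{11\Delta^2rN}\).
\end{itemize}
If \(rN\leq1/(11264\Delta^2)\), then \(\operatorname{TV}_i\leq9/32\) and the mean Hamming error is at least \(23K/64\). The surrogate, now bounded by \(8\epsilon\), then has mean at least \(23\epsilon/8>3\epsilon/4+2\epsilon\), a contradiction. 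This yields \(rN>1/(11264\Delta^2)\) directly, hence \(TN>T^3/(46137344\,r\epsilon^2)\), with no case split.

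The factor \(8\) relative to Theorem~\ref{theorem:coverage lower bound} comes from this reparametrization, not from a ceiling-rounding loss of the kind you describe. Separately, your Step 4 arithmetic is off by a factor \(2\), harmlessly and in the conservative direction: \(TQ/(8r)=T^3/(23068672\,r\epsilon^2)\). The theorem's constant \(5767168\) already charges even-period logs to \(M\).
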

\begin{proof}{Proof.}
Use the family of Theorem
\ref{theorem:coverage lower bound} with \(K=T/2\), \(w=1/4\), and
\(\Delta=8\epsilon/(Kw)\leq1/8\). Give each odd period exactly
\(m=\lceil rN\rceil\) revealing logs, and each even period all
revealing logs. Splitting off one observation, tensorization of total
variation and Pinsker give, for every coordinate flip,
\[
\operatorname{TV}_i\leq2\Delta+\sqrt{11\Delta^2(m-1)}
\leq\frac14+\sqrt{11\Delta^2rN}.
\]
If \(rN\leq1/(11264\Delta^2)\), the mean Hamming error is at least
\(23K/64\). The bounded regret surrogate in that proof is at most
\(8\epsilon\) and has mean at least \(23\epsilon/8\). Success
probability \(3/4\) on every instance would instead bound its mean by
\(3\epsilon/4+8\epsilon/4=11\epsilon/4\), a contradiction.
Thus \(TN>T^3/(46137344r\epsilon^2)\), including integer rounding.
\Halmos
\end{proof}

\subsection{Optimal-Value Estimation with Inherited Inventory}
\label{section:value-lower}
The next family has a common optimal policy, but its value is sensitive to rare demand.

\begin{theorem}[Stationary optimal-value lower bound]
\label{theorem:stationary value lower bound}
{For every \(T\geq8\), \(0<\epsilon\leq T/1024\), and
\(0<\eta<1/2\), there is a two-point stationary backlog family such that any
estimator that estimates \(V_1^*(1)\) to additive error \(\epsilon\) with
probability at least \(1-\eta\) on both instances requires
\[
M\geq
\frac{T^3}{360448\epsilon^2}
(1-2\eta)\log\!\left(\frac{1-\eta}{\eta}\right)
\]
pooled IID demand observations. In particular, for \(\eta\leq1/4\), this is
\(\Omega(T^3\epsilon^{-2}\log(1/\eta))\). The instances have
\(h_t=p_t=1\), \(D_t\in\{0,1\}\), and a base-stock-zero policy that is
optimal throughout the family and therefore requires no learning.}
\end{theorem}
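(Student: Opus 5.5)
The plan is a two-point testing argument. The two instances share an optimal policy but differ in the rate at which a rare demand clears inherited stock. Take stationary demand \(D_t\sim\operatorname{Bernoulli}(q_\pm)\) with \(q_\pm=q_0\pm\delta\), \(q_0=c_0/T\) for a small constant \(c_0\) (say \(q_0=2/T\)), \(h=p=1\), and backlog transitions.

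\emph{Step 1: base-stock zero is optimal on both instances.} For \(y\in[0,1]\), \(C(y)=(1-q)y+q(1-y)=q+(1-2q)y\). This is strictly increasing when \(q<1/2\), and \(C\) is increasing beyond \(1\) as well. So \(S^m=0\), and Lemma~\ref{lemma:myopic base stock} gives \(S_t^*=0\) in every period. The optimal policy \(\pi_t(x)=\max\{x,0\}\) is therefore common to both instances and requires no learning.

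\emph{Step 2: closed form for \(V_1^*(1)\).} From \(x_1=1\), the policy holds the unit until the first period with \(D_t=1\). Before that period the stage cost is \(1-q\) (holding when \(D=0\), zero when \(D=1\)). From the next period on the state is \(0\) or a backlog cleared at no cost, and each remaining stage costs \(q\). Let \(G\) be geometric with rate \(q\), counting the periods up to and including the first demand. Then
\[
V_1^*(1)=(1-q)\,\mathbb E[\min\{G,T\}]+q\,\mathbb E[(T-G)^+],
\]
which is an explicit finite geometric sum. I would differentiate it in \(q\) on \([q_0-\delta,q_0+\delta]\) and show \(|dV/dq|\geq c_1T^2\) there. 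Heuristically, \(\mathbb E[\min\{G,T\}]\approx(1-e^{-qT})/q\) has derivative of order \(-1/q^2\asymp -T^2\) when \(qT=O(1)\), and the second term is smaller by a factor \(q\) up to \(O(T)\) corrections. This step carries all the horizon dependence and is the main obstacle. I must choose \(c_0\) so the derivative stays bounded below by a constant times \(T^2\) uniformly on the interval, with the \(q\,\mathbb E[(T-G)^+]\) term and the \(-q\,\mathbb E\min\{G,T\}\) term not cancelling. I would bound the two pieces' derivatives separately with explicit constants. The hypothesis \(T\geq8\) is used to keep \(q_0\pm\delta\leq1/4\).

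\emph{Step 3: reduction to testing and information bound.} Choose \(\delta=c_2\epsilon/T^2\) with \(c_2\) large enough that \(|V_+(1)-V_-(1)|>2\epsilon\). The hypothesis \(\epsilon\leq T/1024\) makes \(\delta\leq q_0/2\). An estimator that is \(\epsilon\)-accurate with probability \(1-\eta\) on both instances then yields a test \(\psi=\mathbf 1\{\widehat v\) closer to \(V_+\}\) whose errors under both laws are at most \(\eta\). By the data-processing inequality applied to \(\psi\),
\[
M\operatorname{kl}(q_+,q_-)\geq\operatorname{kl}(1-\eta,\eta)=(1-2\eta)\log\frac{1-\eta}{\eta}.
\]
The \(\chi^2\) bound already used in Theorem~\ref{theorem:bounded policy lower bound} gives
\[
\operatorname{kl}(q_+,q_-)\leq\frac{4\delta^2}{q_-(1-q_-)}\lesssim\frac{\delta^2}{q_0}\asymp\frac{\epsilon^2}{T^3}.
\]
Combining the two displays yields \(M\gtrsim T^3\epsilon^{-2}(1-2\eta)\log((1-\eta)/\eta)\). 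Tracking \(c_0,c_1,c_2\) should produce the stated constant \(1/360448\). For \(\eta\leq1/4\), the factor \((1-2\eta)\geq1/2\) gives the \(\Omega(T^3\epsilon^{-2}\log(1/\eta))\) form.
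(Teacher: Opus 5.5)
Your proposal is correct and follows the paper's proof essentially step for step: the same stationary Bernoulli family with demand probability of order \(1/T\) (the paper uses \(\mathbb P(D_t=1)=1/(2T)\)), the same closed form for \(V_1^*(1)\) (your geometric-variable expression equals \(T(1-\rho)+(2\rho-1)\sum_{k=0}^{T-1}\rho^k\) with \(\rho=1-q\)), a derivative bound of order \(T^2\) on a window of width \(O(1/T)\), a separation \(\delta\) of order \(\epsilon/T^2\), the \(\chi^2\) bound giving \(\operatorname{kl}\) of order \(T\delta^2\), and data processing against \(\operatorname{kl}(1-\eta,\eta)\). The step you leave open is closed in the paper by Bernoulli's inequality on \(I_T=[1-5/(8T),1-3/(8T)]\) (so \(\rho^{T-1}\geq3/8\), \(S_T\geq3T/8\), \(S_T'\geq9T^2/128\), hence \(v_T'\geq T^2/64\)), and the exact constant \(1/360448=1/(22\cdot128^2)\) comes from \(\delta=128\epsilon/T^2\) and \(\operatorname{kl}\leq22T\delta^2\); your \(c_0=2\) would give a different constant, which if tracked carefully is larger and therefore still implies the stated bound.
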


\begin{proof}{Proof.}
{Let \(\mathbb P(D_t=0)=\rho\) IID over time. For all \(\rho>1/2\),
the myopic base-stock level is zero, so Lemma \ref{lemma:myopic base stock}
implies that base stock zero is dynamically optimal. Starting from one unit,
the unit is held until the first positive demand and, after it is consumed,
each subsequent positive demand is backlogged. Direct summation gives
\begin{equation}
\label{eq:val-formula}
v_T(\rho):=V_1^*(1)
=T(1-\rho)+(2\rho-1)\sum_{k=0}^{T-1}\rho^k.
\end{equation}
Writing \(S_T(\rho)=\sum_{k=0}^{T-1}\rho^k\),
\begin{equation}
\label{eq:val-derivative}
v_T'(\rho)=-T+2S_T(\rho)+(2\rho-1)S_T'(\rho).
\end{equation}
On
\(I_T=[1-5/(8T),1-3/(8T)]\), Bernoulli's inequality gives
\(\rho^{T-1}\geq3/8\). Hence
\(S_T\geq3T/8\), while summing the terms with
\(k\geq\lceil T/2\rceil\) gives
\(S_T'\geq9T^2/128\). Also \(2\rho-1\geq27/32\). Substitution into
\eqref{eq:val-derivative} yields \(v_T'(\rho)\geq T^2/64\) for \(T\geq8\).

Set
\[
\rho_0=1-\frac1{2T},\qquad
\delta=\frac{128\epsilon}{T^2},\qquad
\rho_\pm=\rho_0\pm\delta.
\]
The assumed range of \(\epsilon\) puts both parameters in \(I_T\), and the
mean-value theorem gives
\(|v_T(\rho_+)-v_T(\rho_-)|\geq4\epsilon\). Meanwhile,
the bound
\(\operatorname{kl}(p,q)\leq(p-q)^2/[q(1-q)]\), together with
\(q\geq1/2\) and \(1-q\geq3/(8T)\) throughout \(I_T\), gives
\begin{equation}
\label{eq:val-kl}
\max\!\left\{
\operatorname{kl}(\rho_-,\rho_+),
\operatorname{kl}(\rho_+,\rho_-)
\right\}
\leq22T\delta^2.
\end{equation}
An estimator accurate to \(\epsilon\) on both instances induces, by
thresholding at the midpoint of the two values, a test with error at most
\(\eta\) under each instance. Data processing for relative entropy gives
\[
M\operatorname{kl}(\rho_+,\rho_-)
\geq
\operatorname{kl}(1-\eta,\eta)
=(1-2\eta)\log\!\left(\frac{1-\eta}{\eta}\right).
\]
Consequently,
\[
M\geq
\frac{T^3}{360448\epsilon^2}
(1-2\eta)\log\!\left(\frac{1-\eta}{\eta}\right).
\]
For \(\eta\leq1/4\), the last factor is bounded below by a universal
positive constant times \(\log(1/\eta)\).
The policy is identical and known on the two instances; only its value is
hard to estimate. \Halmos}
\end{proof}

Section~\ref{section:stationary-policy} gives quadratic policy learning on
this same class and initial state: identifying the policy is easier.

\subsection{Optimal-Value Nonidentification under Censoring}
\begin{proposition}[Identical sales and optimal policies, different values]
\label{prop:censored-value-nonidentification}
For every \(T\geq1\) and \(0<\epsilon<T/32\), optimal-value estimation
from censored lost-sales logs has minimax failure probability at least
\(1/2\) for every finite archive size, already with IID demand,
\(h_t=p_t=\bar D=1\), \(x_1=0\), strict coverage, and \(r=1\).
\end{proposition}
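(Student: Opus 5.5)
The plan is a two-point (Le Cam) argument. I will build two IID demand laws \(P\) and \(Q\) on \([0,1]\) and a common boundary array such that three things hold. First, the censored sales logs have \emph{identical} joint law under \(P\) and \(Q\). Second, both instances satisfy strict coverage with \(r=1\) and share the same optimal policy. Third, their optimal values from \(x_1=0\) differ by more than \(2\epsilon\). Because the data law is the same, any estimator \(\widehat v\) must satisfy
\[
\mathbb P_P(|\widehat v-V_1^*(0;P)|>\epsilon)+\mathbb P_Q(|\widehat v-V_1^*(0;Q)|>\epsilon)\geq1.
\]
The two \(\epsilon\)-balls around the two values are disjoint, so the complements of the two events cannot both occur. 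Hence the larger of the two failure probabilities is at least \(1/2\), for every archive size \(N\).

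\textbf{The instances.} Take \(h_t=p_t=\bar D=1\) and cap \(a_t=1/2\) in every period. Set every boundary to \(b_t^j=1/2\); these are fixed, so conditional noninformative censoring holds trivially and \(r=1\). Under \(P\), let \(D=0\) with probability \(3/4\) and \(D=1\) with probability \(1/4\). Under \(Q\), let \(D=0\) with probability \(3/4\) and \(D=3/4\) with probability \(1/4\). In both cases \(\min\{D,1/2\}\) equals \(0\) with probability \(3/4\) and \(1/2\) with probability \(1/4\). The logged sales therefore have the same product law \(\bigotimes_t(\operatorname{law}\min\{D,1/2\})^{\otimes N_t}\) under both instances, whatever the counts \(N_t\). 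Strict coverage holds because \(F^-(1/2)=3/4>p/(h+p)=1/2\) under both laws.

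\textbf{The optimal policy and the values.} Under both laws \(F(0)=3/4>1/2\), so the myopic cost \(C(y)=\mathbb E|y-D|\) is minimized only at the median \(y=0\), giving \(S^m=0\). Lemma~\ref{lemma:myopic base stock} then yields \(S_t^*=0\) in every period, so base stock zero is optimal in both instances. From \(x_1=0\) under lost sales, the state stays at zero forever, and carried inventory is zero, so it lies within the caps. The per-period cost is therefore \(p\,\mathbb E[D]\), which gives
\[
V_1^*(0;P)=\frac T4,\qquad V_1^*(0;Q)=\frac{3T}{16},\qquad V_1^*(0;P)-V_1^*(0;Q)=\frac T{16}.
\]
This gap exceeds \(2\epsilon\) exactly when \(\epsilon<T/32\), matching the stated range.

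\textbf{Remaining checks.} I expect no real obstacle; what is left is bookkeeping against the definitions. I will verify that the optimal policy is identical, so that policy learning is trivial while value estimation fails. I will also confirm that the caps are "carry-safe": they contain \(S^*=0\), and the carried stock is zero. The essential point is that the unobserved tail above the cap changes the expected shortage cost \(p\,\mathbb E[(D-y)^+]\) while leaving both the censored observations and the optimal decisions unchanged.
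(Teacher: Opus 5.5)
Your proposal is correct and follows the paper's proof essentially verbatim: the same pair \(\tfrac34\delta_0+\tfrac14\delta_1\) and \(\tfrac34\delta_0+\tfrac14\delta_{3/4}\) (with the labels \(P,Q\) swapped), the same common boundary and cap \(1/2\) giving \(r=1\) and \(F^-(1/2)=3/4>1/2\), and the same use of Lemma~\ref{lemma:myopic base stock} to make base stock zero optimal with values \(T/4\) and \(3T/16\). Your closing step is also the paper's: the two instances induce the same output law for any, possibly randomized, estimator, and the \(\epsilon\)-success intervals are disjoint exactly when \(\epsilon<T/32\).
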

\begin{proof}{Proof.}
Consider IID lost-sales demand with \(h_t=p_t=1\), \(\bar D=1\),
\(x_1=0\), and every logging boundary and cap equal to \(a=1/2\). Set
\(P=\tfrac34\delta_0+\tfrac14\delta_{3/4}\) and
\(Q=\tfrac34\delta_0+\tfrac14\delta_1\).
Both laws give sales zero with probability \(3/4\) and \(1/2\) otherwise.
Every finite archive therefore has the same law, although all logs qualify
and \(r=1\). Strict coverage holds:
\(P(D<a)=Q(D<a)=3/4>1/2\). Zero is the unique myopic minimizer under
both laws, so Lemma~\ref{lemma:myopic base stock} makes the
zero-base-stock policy optimal. Writing \(V_{1,P}^*\) and \(V_{1,Q}^*\)
for the optimal values under the respective laws gives
\[
V_{1,P}^*(0)=T\mathbb E_P D=\frac{3T}{16},\qquad
V_{1,Q}^*(0)=T\mathbb E_Q D=\frac T4.
\]
The common truncated optimum is \(T/8\); the missing tail constants are
\(T/16\) and \(T/8\). Thus, for \(0<\epsilon<T/32\) and every finite
archive size,
\[
\inf_{\widehat V}\max_{R\in\{P,Q\}}
\mathbb P_R\!\left(|\widehat V-V_{1,R}^*(0)|>\epsilon\right)\geq\frac12,
\]
where \(\widehat V\) ranges over all estimators from the sales logs,
including randomized ones. The two success intervals are disjoint, while
the estimator has the same output law, so their probabilities sum to at most
one. The decision is identified, but its absolute value is not.
\Halmos
\end{proof}

\section{A Full-Observation Benchmark}
\label{section: VI}
With fully observed demand, period-specific empirical marginals yield a
benchmark for both policy learning and valuation. We first state its guarantee
and then establish the uniform-convergence argument used again in
Section~\ref{section: censored}.
\subsection{Product ERM and the Empirical-DP Guarantee}
\label{section:empirical-dp}
We use the Product ERM guarantee of \citet{xie2026vc} as an upper-bound
benchmark. The same uniform estimation event controls both policy regret
and optimal-value error.

\noindent\textbf{Product empirical risk minimization (Product ERM).}
Given \(N\) fully observed demand samples per period, mutually independent
within and across periods, define
\[
\widehat P_t=\frac1N\sum_{j=1}^N\delta_{D_t^j},\qquad
\widehat P^\times=\bigotimes_{t=1}^T\widehat P_t.
\]
Here \(\delta_d\) is a point mass. Let
\(\Pi_{\rm BS}=\{\pi^S:S\in[0,\bar D]^T,
\pi_t^S(x)=\max\{x,S_t\}\}\). For initial stock \(x_1\), let
\(\ell_{x_1}(\pi;d_{1:T})\) be the realized total holding and shortage
cost under the dynamics of Section~\ref{section: inventory preliminaries}.
Product ERM selects
\[
\widehat\pi^\times\in\argmin_{\pi\in\Pi_{\rm BS}}
\mathbb E_{\widehat P^\times}[\ell_{x_1}(\pi;D_{1:T})].
\]
Replacing each demand law in the Bellman recursion by \(\widehat P_t\)
implements this minimization without enumerating trajectories. This is
\emph{empirical DP}, or periodwise \emph{sample average approximation
(SAA)}. We use its exact DP-optimal representative
\(\widehat\pi^{\rm DP}\), optimal under the empirical product law from
every admissible state. Denote its thresholds by \(\widehat S_t\) and
its optimal values by \(\widehat V_t^{\rm DP}\). Unlike trajectory ERM, Product ERM integrates
over the product of empirical marginals. Recombination creates no new
independent observations: the scalar sample budget is \(M=TN\).

\begin{corollary}[Fixed-start empirical-DP benchmark]
\label{prop:empirical-dp-joint}
In the fully observed backlog model with \(N\) samples per period, there
is a universal constant \(C_0\) such that, with probability at least
\(1-\eta\), simultaneously for every \(x\in[-\bar D,\bar D]\),
\[
|\widehat V_1^{\rm DP}(x)-V_1^*(x)|\leq b_1,\qquad
V_1^{\widehat\pi^{\rm DP}}(x)-V_1^*(x)\leq2b_1,
\]
where
\begin{equation}
\label{eq:fixed-start-bound}
b_1=\frac{C_c\bar D T}{\sqrt N}
\left(C_0+\sqrt{\frac{\log(1/\eta)}2}\right).
\end{equation}
For \(0<\eta<1\) and \(0<\epsilon\leq C_c\bar D T\),
\(N=O(C_c^2\bar D^2T^2\epsilon^{-2}[1+\log(1/\eta)])\) suffices
for both errors to be at most \(\epsilon\). 
\end{corollary}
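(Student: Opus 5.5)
We prove Corollary~\ref{prop:empirical-dp-joint} with a uniform-deviation argument propagated backward through the Bellman recursion. The key point is that every value function appearing in the recursion lies in a class of uniformly Lipschitz, convex, nondecreasing functions. Uniform convergence of \(\widehat P_t\) against such a class then controls each stage at rate \(C_c\bar D/\sqrt N\). Summing over \(T\) stages gives \(b_1\), with no union bound over periods needed, because we bound the \emph{total} deviation directly.

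\textbf{Step 1: a telescoping bound.} By Lemma~\ref{lemma:myopic base stock}, applied to both the true laws and the empirical laws (which are also supported in \([0,\bar D]\)), both \(V_t^*\) and \(\widehat V_t^{\rm DP}\) are convex and nondecreasing, have thresholds in \([0,\bar D]\), and are constant below their thresholds. Their slopes are bounded by \(C_c(T-t+1)\). A sharper bound of \(C_c\) per stage also holds, because a marginal unit only shifts one period's cost. Write \(\widehat U_t\) and \(U_t\) for the Bellman objectives. Using \(|\min f-\min g|\le\sup|f-g|\) over the common feasible set \(y\ge x^+\) with \(y\le\bar D\) effective, we get
\[
\sup_x|\widehat V_t^{\rm DP}(x)-V_t^*(x)|\le \sup_{y\in[0,\bar D]}\bigl|(\widehat P_t-P_t)g_{t}^{y}\bigr|+\sup_x|\widehat V_{t+1}^{\rm DP}(x)-V_{t+1}^*(x)|,
\]
where \(g_t^y(d)=c_t(y,d)+V_{t+1}^*(y-d)\). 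Iterating this inequality bounds \(\sup_x|\widehat V_1^{\rm DP}-V_1^*|\) by \(\sum_t Z_t\), where \(Z_t=\sup_y|(\widehat P_t-P_t)g_t^y|\).

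\textbf{Step 2: the main obstacle.} We must bound \(\sum_t Z_t\) by order \(C_c\bar DT/\sqrt N\) plus a single \(\sqrt{\log(1/\eta)/2}\) term. For each \(t\), \(d\mapsto g_t^y(d)\) is a monotone-plus-Lipschitz family indexed by the scalar \(y\), so a Dudley/VC bracketing argument gives \(\mathbb E Z_t\le C_0C_c\bar D/\sqrt N\). Obtaining the \(C_c\bar D\) scale rather than \(C_c\bar D(T-t)\) is the delicate part. We will use the increments \(V_{t+1}^*(y-d)-V_{t+1}^*(y)\), whose range over \(d\in[0,\bar D]\) should be at most \(C_c\bar D\) under zero ordering cost. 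The justification is that the deficit \(d\) is restored next period at essentially no cost beyond one stage's shortage, since replenishment is free and the value is constant below the threshold. This is exactly the content of Corollary~4 of \citet{xie2026vc}, which we invoke if the direct argument fails. Next, \(\sum_t Z_t\) is a function of the \(TN\) independent samples, and replacing one sample changes it by at most \(2C_c\bar D/N\). McDiarmid's inequality then gives, with probability \(1-\eta\),
\[
\sum_tZ_t\le \mathbb E\sum_tZ_t+C_c\bar D T\sqrt{\log(1/\eta)/(2N)},
\]
after constants are absorbed. This yields \(b_1\) uniformly in \(x\).

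\textbf{Step 3: the policy gap and sample size.} We decompose
\[
V_1^{\widehat\pi^{\rm DP}}-V_1^*=\bigl(V_1^{\widehat\pi^{\rm DP}}-\widehat V_1^{\rm DP}\bigr)+\bigl(\widehat V_1^{\rm DP}-V_1^*\bigr).
\]
The first term is the deviation between true and empirical policy evaluation of a fixed base-stock policy. It obeys the same telescoping argument, with the policy's value functions in place of optimal ones; these are still bounded-variation and indexed by thresholds in \([0,\bar D]\). It therefore lies on the same event, after enlarging \(C_0\) to cover the class of threshold-indexed evaluations, and is at most \(b_1\), which gives \(2b_1\). Finally, setting \(2b_1\le\epsilon\) and solving for \(N\) gives \(N=O(C_c^2\bar D^2T^2\epsilon^{-2}[1+\log(1/\eta)])\). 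The condition \(\epsilon\le C_c\bar DT\) ensures that \(N\ge1\) is meaningful.
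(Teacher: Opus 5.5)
\textbf{There is a genuine gap in Step~2.} Your Step~1 inequality is valid, but it is too lossy to ever yield \(b_1\). The claim you rely on to control it is false.

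You assert that \(d\mapsto V_{t+1}^*(y-d)\) has range \(O(C_c\bar D)\), equivalently that \(V_t^*\) has slope \(O(C_c)\), because ``the deficit is restored next period.'' Free replenishment does make a \emph{shortfall} cheap. Stock \emph{above} the next base-stock level, however, is carried and pays holding cost until demand consumes it, possibly for all \(T-t\) remaining periods.

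The paper's own family in Theorem~\ref{theorem:stationary value lower bound} exhibits this. Take \(D\in\{0,1\}\), \(\mathbb P(D=0)=\rho=1-1/(2T)\), \(h=p=1\), and \(T\geq8\). Then \(V_{t+1}^*(1)-V_{t+1}^*(0)=(2\rho-1)\sum_{k=0}^{T-t-1}\rho^k\geq7(T-t)/16\).

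The corollary is uniform over \(x\in[-\bar D,\bar D]\), so your \(Z_t\) includes \(y=1\). There \((\widehat P_t-P_t)g_t^1=(\widehat\rho_t-\rho)\,[g_t^1(0)-g_t^1(1)]\), where \(\widehat\rho_t\) is the period-\(t\) empirical frequency of zero demand, and \(g_t^1(0)-g_t^1(1)\geq7(T-t)/16\). Hence \(\mathbb E Z_t\geq c\,(T-t)\sqrt{\rho(1-\rho)/N}\geq c'(T-t)/\sqrt{TN}\) once \(N\geq T\). Your target \(\mathbb E Z_t\leq C_0C_c\bar D/\sqrt N\) is therefore false.

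Worse, the \(Z_t\) are independent, so \(\sum_tZ_t\) is of order \(T^{3/2}/\sqrt N\); its standard deviation is only \(O(T/\sqrt N)\). That is a factor \(\sqrt T\) above \(b_1\). Any bound on \(\sum_tZ_t\) can give at best \(N\) of order \(T^3\epsilon^{-2}\) per period, not the claimed \(T^2\epsilon^{-2}\). The true error is \(O(T/\sqrt N)\) only because the independent period errors \(\widehat\rho_t-\rho\) cancel in sign, and your absolute-value recursion discards that cancellation. The McDiarmid constant \(2C_c\bar D/N\) fails for the same reason: one period-\(t\) sample can move \(\sum_tZ_t\) by order \(C_c\bar D(T-t+1)/N\).

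\textbf{Why Corollary~4 of \citet{xie2026vc} does not repair this.} It is not a per-stage statement about \(Z_t\). It bounds, for IID trajectories, the uniform deviation of the \emph{period-averaged trajectory loss} over the whole base-stock class, with normalized range \(C_c\bar D\). The paper uses it exactly in that form and never decomposes by stage:
\begin{itemize}
\item On \(\Pi_{1,x}=\{S\in[0,\bar D]^T:S_1\geq x^+\}\), costs from \(x\) equal costs from zero. So \(V_1^*(x)=\inf_{\Pi_{1,x}}J_1\) and \(\widehat V_1^{\rm DP}(x)=\inf_{\Pi_{1,x}}\widehat J_1^\times\).
\item The horizon-free bound \eqref{eq:horizon-free-ingredient} is transferred to the product of empirical marginals by the cyclic-partition Lemma~\ref{lemma:product empirical concentration}. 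Multiplying by \(T\) gives \(\sup_S|J_1(S)-\widehat J_1^\times(S)|\leq b_1\) on a single event.
\item Uniformity in \(x\) is then automatic, since every \(\Pi_{1,x}\) is a subclass, and infimum stability gives the value bound.
\item The \(2b_1\) policy bound follows because \(\widehat\pi^{\rm DP}\) lies in that same class.
\end{itemize}
This also resolves the weak point of your Step~3. There, a stagewise scheme would need uniformity over all downstream thresholds of a data-dependent policy, which your argument does not control.
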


\subsection{Horizon-Free Uniform Convergence and Product Transfer}
\label{section:product-transfer}
\paragraph{The horizon-free ingredient and its scope.}
Corollary~4 of \citet{xie2026vc} bounds expected one-sided uniform
deviation by \(C(L+1)U/\sqrt N\), for positive integers \(d,N\),
\(U>0\), and integer lead time \(L\geq0\). Starting empty, with zero
ordering cost, demands in \([0,U]\), common costs in \([0,1]\) and
thresholds in \([0,(L+1)U]\), loss averages costs in periods
\(L+1,\ldots,L+d\). The \(N\) trajectories are IID from any joint
demand law; \(C\) is universal.
For our zero-lead-time application, let \(Q\) be any law on
\([0,\bar D]^d\), \(Z_1,\ldots,Z_N\) IID trajectories from \(Q\),
and \(\ell_S(z)=d^{-1}\sum_{j=1}^d c_j(Y_j(S,z),z_j)\), where
\(Y_1=S_1\) and \(Y_{j+1}=\max\{S_{j+1},Y_j-z_j\}\).
Write \(Qf=\int f\,dQ\).
Their symmetrization argument (Section~3.1, p.~11), with periodwise
\(C_c\)-Lipschitz costs, yields the precise consequence used here:
\begin{equation}
\label{eq:horizon-free-ingredient}
\mathbb E\sup_{S\in[0,\bar D]^d}
\left|Q\ell_S-\frac1N\sum_{i=1}^N\ell_S(Z_i)\right|
\leq\frac{C_0C_c\bar D}{\sqrt N}.
\end{equation}
Here \(C_0\) absorbs the two-sided factor, with no \(d\), \(\log d\),
or \(\log N\) dependence. Their normalized inventory-level class has
scale-sensitive dimension at most \(2/\gamma+1\) at margin
\(0<\gamma\leq1\), by ordering last replenishment
times and using nonnegative demands. Its finite entropy integral gives a
period-uniform contraction bound; averaging preserves the rate even though
ordinary pseudodimension can grow. Independent nonidentical demands are
included; independence is needed separately for product transfer. Nonnegative
thresholds give identical post-order recursions and stage costs under lost
sales, also for truncated demands in \([0,a_j]\subseteq[0,\bar D]\).
This reduction extends \eqref{eq:horizon-free-ingredient} to that class and
to tails of length \(d=T-t+1\).

Using the cyclic partition in Section~4.5 of \citet{xie2026vc}, we obtain
the following high-probability product-empirical transfer.

\begin{lemma}[High-probability product-empirical transfer]
\label{lemma:product empirical concentration}
{Let \(d,n\) be positive integers and \(P=\bigotimes_{t=1}^{d}P_t\).
For each coordinate let
\(X_{t,1},\ldots,X_{t,n}\) be IID from \(P_t\), independently across
coordinates; assume the displayed suprema are measurable. Write \(\widehat P_t\) for the corresponding empirical
marginal and \(\widehat P^\times=\bigotimes_{t=1}^{d}\widehat P_t\).
For a function \(f\), write \(Pf=\int f\,dP\).
Let \(\mathcal F\) be a class of measurable functions whose common range
has length at most \(R>0\), and define
\[
G_n^\times:=\sup_{f\in\mathcal F}
|Pf-\widehat P^\times f|.
\]
For a deterministic bound \(\mu_n\geq0\), if the ordinary IID empirical process based on
\(Z_1,\ldots,Z_n\stackrel{\mathrm{iid}}\sim P\) satisfies
\[
\mathbb E\!\left[
\sup_{f\in\mathcal F}|Pf-n^{-1}\textstyle\sum_{i=1}^nf(Z_i)|
\right]\leq\mu_n,
\]
then, for every \(0<\delta<1\),
\[
G_n^\times\leq
\mu_n+R\sqrt{\frac{\log(1/\delta)}{2n}}
\]
with probability at least \(1-\delta\). Consequently, if
\(\widehat f\in\arg\min_{f\in\mathcal F}\widehat P^\times f\) and
\(f^*\in\arg\min_{f\in\mathcal F}Pf\), then
\[
P\widehat f-Pf^*\leq
2\mu_n+2R\sqrt{\frac{\log(1/\delta)}{2n}}
\]
on the same event.}
\end{lemma}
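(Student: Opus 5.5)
The proof has two parts. First we show that \(\mathbb E G_n^\times\leq\mu_n\). Then we show that \(G_n^\times\) concentrates around its mean at the bounded-difference rate in \(n\) alone. The ERM consequence is the usual two-sided argument on the same event.

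\textbf{Step 1: bounding the mean by cyclic partition.} For each permutation-free cyclic shift \(k\in\{0,\ldots,n-1\}\), build \(n\) synthetic trajectories
\[
Z_i^{(k)}=(X_{1,i},X_{2,i+k},X_{3,i+2k},\ldots,X_{d,i+(d-1)k}),
\]
with indices taken mod \(n\). This is not quite right when \(d>n\). Instead, as in Section~4.5 of \citet{xie2026vc}, average over independent uniform shifts \(\sigma=(\sigma_1,\ldots,\sigma_d)\in\{0,\ldots,n-1\}^d\) with \(\sigma_1=0\). Define \(Z_i^{\sigma}=(X_{t,i+\sigma_t})_{t\leq d}\). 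Three facts then hold. For each fixed \(\sigma\), the \(n\) trajectories \(Z_1^\sigma,\ldots,Z_n^\sigma\) are IID from \(P\), since each uses distinct draws in every coordinate and coordinates are independent. The average over \(\sigma\) of \(n^{-1}\sum_i\delta_{Z_i^\sigma}\) equals \(\widehat P^\times\) exactly, because every index tuple is hit with equal weight \(n^{-d}\) after also averaging \(i\). Finally, \(f\mapsto|Pf-\cdot\,f|\) is convex and the supremum of convex functions is convex. Hence
\[
G_n^\times\leq\mathbb E_\sigma\sup_{f}\Big|Pf-\tfrac1n\textstyle\sum_if(Z_i^\sigma)\Big|.
\]
Taking expectation over the data and using the IID hypothesis for each \(\sigma\) gives \(\mathbb E G_n^\times\leq\mu_n\). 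Measurability is assumed.

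\textbf{Step 2: concentration via McDiarmid.} View \(G_n^\times\) as a function of the \(n\) independent column blocks \(B_j=(X_{1,j},\ldots,X_{d,j})\). This grouping is the key point. Changing a single scalar \(X_{t,j}\) changes \(\widehat P^\times\) by mass \(1/n\) in coordinate \(t\), which would give \(dn\) coordinates and a horizon-dependent bound. Changing a whole block \(B_j\), however, changes every marginal \(\widehat P_t\) simultaneously. This is the main obstacle: we must show that the bounded difference per block is still \(R/n\) rather than \(dR/n\).

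To handle this, use the shift representation from Step 1. For each fixed \(\sigma\) and each \(i\), the trajectory \(Z_i^\sigma\) uses exactly one entry of each coordinate. The block-\(j\) entries therefore appear in up to \(d\) different trajectories, so a direct argument per \(\sigma\) fails. Instead, regroup by the scalar entries: replace block \(j\) by changing its \(d\) scalars one at a time, and bound the total change through the product structure. Write
\[
\widehat P^\times-\widehat P'^\times=\sum_t \widehat P'_1\otimes\cdots\otimes(\widehat P_t-\widehat P'_t)\otimes\cdots\otimes\widehat P_d.
\]
This telescoping yields only \(dR/n\). If it is the best available, I will fall back to a different grouping. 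Define the \(n\) independent units to be the "diagonal" sets \(\{X_{t,i+\sigma_t}\}_t\) for a fixed random \(\sigma\). Conditioned on \(\sigma\), the supremum for that single \(\sigma\) has bounded difference \(R/n\) per unit, since changing one unit changes one trajectory. Then apply McDiarmid to \(\sup_f|Pf-n^{-1}\sum_i f(Z_i^\sigma)|\) for this one \(\sigma\).

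The difficulty is passing the high-probability bound through the convex average over \(\sigma\). I would try Jensen applied to the moment generating function: by convexity,
\[
\exp(\lambda G_n^\times)\leq\mathbb E_\sigma\exp\!\big(\lambda\sup_f|\cdots^\sigma|\big).
\]
Each term on the right has the sub-Gaussian MGF bound \(\exp(\lambda\mu_n+\lambda^2R^2/(8n))\) from the bounded-differences MGF inequality, since the units are independent for each \(\sigma\). Taking expectations and applying a Chernoff bound gives \(G_n^\times\leq\mu_n+R\sqrt{\log(1/\delta)/(2n)}\) with probability at least \(1-\delta\). This MGF route avoids the bounded-difference issue for \(G_n^\times\) itself, and I expect it to be the decisive step.

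\textbf{Step 3: ERM consequence.} On this event, the standard decomposition gives
\[
P\widehat f-Pf^*=(P\widehat f-\widehat P^\times\widehat f)+(\widehat P^\times\widehat f-\widehat P^\times f^*)+(\widehat P^\times f^*-Pf^*).
\]
The middle term is nonpositive because \(\widehat f\) minimizes \(\widehat P^\times f\) over \(\mathcal F\). The two outer terms are each at most \(G_n^\times\). This yields the stated bound \(2\mu_n+2R\sqrt{\log(1/\delta)/(2n)}\).
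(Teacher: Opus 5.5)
Your proposal is correct and follows the paper's proof. Both use the same $n^{d-1}$ cyclic groups with $\sigma_1=0$ and the averaging bound $G_n^\times\le n^{-(d-1)}\sum_\sigma G_\sigma$. Both apply the bounded-differences MGF bound to each group, whose $n$ trajectories are IID from $P$, and then pass through the dependent average by convexity of the exponential before a Chernoff step and the standard ERM decomposition.

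The separate mean bound in Step 1 is not needed, and the abandoned single-shift and blockwise-McDiarmid attempts can be dropped from a final write-up.
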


\begin{proof}{Proof.}
{Index the \(n^{d-1}\) cyclic groups by
\(a=(a_2,\ldots,a_d)\in\{0,\ldots,n-1\}^{d-1}\). Group \(a\) consists of
the \(n\) trajectories
\[
Z_i^a=(X_{1,i},X_{2,i+a_2},\ldots,X_{d,i+a_d}),
\qquad i=1,\ldots,n,
\]
where indices are taken modulo \(n\). Within a fixed group these
trajectories are IID from \(P\), and averaging over all groups gives
exactly \(\widehat P^\times\). If
\(G_a=\sup_f|Pf-n^{-1}\sum_i f(Z_i^a)|\), the triangle inequality gives
\[
G_n^\times\leq n^{-(d-1)}\sum_aG_a.
\]
For \(\lambda>0\), changing one trajectory in a fixed group changes \(G_a\) by at most
\(R/n\). The bounded-differences moment-generating-function bound thus
gives
\[
\mathbb E\exp\{\lambda(G_a-\mathbb EG_a)\}
\leq\exp\!\left(\frac{\lambda^2R^2}{8n}\right).
\]
The groups need not be independent. Pointwise convexity of the exponential
and \(\mathbb EG_a\leq\mu_n\) imply
\[
\mathbb E e^{\lambda G_n^\times}
\leq n^{-(d-1)}\sum_a\mathbb E e^{\lambda G_a}
\leq\exp\!\left(\lambda\mu_n+\frac{\lambda^2R^2}{8n}\right).
\]
Chernoff optimization proves the first claim. The second is the standard
ERM inequality \(P\widehat f-Pf^*\leq2G_n^\times\). \Halmos}
\end{proof}

\begin{proof}{Proof of Corollary~\ref{prop:empirical-dp-joint}.}
For \(x\in[-\bar D,\bar D]\) and a tail of length \(d_t=T-t+1\), set
\[
\Pi_{t,x}=\{S\in[0,\bar D]^{d_t}:S_t\geq\max\{x,0\}\}.
\]
Let \(J_t(S)\) and \(\widehat J_t^\times(S)\) be the true and
product-empirical total tail costs from zero. On this subclass they also
equal the costs from \(x\), since replenishment is free. Replacing the
first threshold by its maximum with \(x\) leaves its deployed action
unchanged, so
\[
V_t^*(x)=\inf_{S\in\Pi_{t,x}}J_t(S),\qquad
\widehat V_t^{\rm DP}(x)=\inf_{S\in\Pi_{t,x}}\widehat J_t^\times(S).
\]
Normalized losses lie in \([0,C_c\bar D]\). For \(t=1\), Lemma
\ref{lemma:product empirical concentration} and the expected-deviation
bound above give \(\sup_S|J_1(S)-\widehat J_1^\times(S)|\leq b_1\)
with probability at least \(1-\eta\). Every \(\Pi_{1,x}\) is a subclass
of this same class, so no union over states is needed. Infimum stability
gives value error at most \(b_1\). Empirical DP attains the empirical
infimum, hence its true cost is at most
\(\widehat V_1^{\rm DP}(x)+b_1\leq V_1^*(x)+2b_1\). The stated
sample order makes \(2b_1\leq\epsilon\). \Halmos
\end{proof}

\paragraph{Extension to all starting periods and states.}
Apply the same argument to each tail with failure budget \(\delta/T\).
A union bound gives, simultaneously for all \(t\) and \(x\), value error
at most \(b_t\) and policy gap at most \(2b_t\), where
\[
b_t=\frac{C_c\bar D d_t}{\sqrt N}
\left(C_0+\sqrt{\frac{\log(T/\delta)}2}\right).
\]
Thus, for a sufficiently large universal \(C\),
\(N\geq C C_c^2\bar D^2T^2\epsilon^{-2}(1+\log(T/\delta))\) implies
\begin{equation}
\label{eq:empirical-dp-joint}
\sup_{t,x}\bigl(V_t^{\widehat\pi^{\rm DP}}(x)-V_t^*(x)\bigr)\leq\epsilon,
\qquad
\sup_{t,x}|\widehat V_t^{\rm DP}(x)-V_t^*(x)|\leq\epsilon
\end{equation}
with probability at least \(1-\delta\), for
\(t\in[T]\), \(x\in[-\bar D,\bar D]\), and
\(0<\epsilon\leq C_c\bar D T\). The extra \(\log T\) serves only the
simultaneous guarantee across starting periods.

\subsection{Stationary Policy Learning}
\label{section:stationary-policy}
Stationarity allows all observations to estimate a common demand law.
Pooling reduces the policy-learning upper order from cubic to quadratic in
the horizon, as the following newsvendor reduction shows.
\begin{proposition}[Uniform stationary policy bound]
\label{prop:stationary policy value}
{Suppose demand is IID on \([0,\bar D]\), the holding and shortage
coefficients \(h,p>0\) are constant over time, and ordering has zero fixed and per-unit
costs and zero lead time. Let
\[
g(y)=\mathbb E\!\left[h(y-D)^++p(D-y)^+\right],
\qquad s^*:=\min\arg\min_{y\in[0,\bar D]}g(y),
\]
and let \(\pi_s\) be the stationary base-stock policy with level
\(s\in[0,\bar D]\). In either the backlog or lost-sales model,
\(\pi_{s^*}\) is optimal and
\[
\sup_{x_1\in[0,\bar D]}
\left[V_1^{\pi_s}(x_1)-V_1^*(x_1)\right]
\leq T\left[g(s)-g(s^*)\right].
\]
If \(x_1\leq\min\{s,s^*\}\), equality holds:
\[
V_1^{\pi_s}(x_1)-V_1^*(x_1)
=T\left[g(s)-g(s^*)\right].
\]}
\end{proposition}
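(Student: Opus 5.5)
The plan is to reduce everything to the one-period newsvendor function \(g\) through a pathwise description of the post-order levels, and then compare policies period by period. Two facts drive this. First, \(g\) is convex and \(s^*\) is its smallest minimizer, so \(g\) is nonincreasing on \([0,s^*]\) and nondecreasing on \([s^*,\bar D]\). Second, the post-order level \(y_t\) of any feasible policy is chosen before \(D_t\) and is independent of it, so the expected period-\(t\) cost equals \(\mathbb E[g(y_t)]\). Hence \(V_1^\pi(x_1)=\sum_t\mathbb E[g(y_t^\pi)]\), and all claims become comparisons of \(g\) along coupled demand paths.

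\textbf{Step 1: closed form under a base-stock policy.} Write \(w_t=x_1-\sum_{k<t}D_k\) for the inventory when nothing is ever ordered. For a stationary base-stock level \(s\in[0,\bar D]\), I will show by induction that
\[
y_t^s=\max\{s,w_t\}.
\]
The recursion is \(y_t=\max\{s,\,y_{t-1}-D_{t-1}\}\) under backlog, and \(y_t=\max\{s,(y_{t-1}-D_{t-1})^+\}\) under lost sales. The positive part is irrelevant because \(s\geq0\). Unrolling, every term \(s-\sum_{k\leq j<t}D_j\) is dominated by \(s\) since demand is nonnegative, and only \(s\) and \(w_t\) survive. The same formula holds in both models.

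\textbf{Step 2: optimality of \(\pi_{s^*}\).} Couple an arbitrary feasible policy \(\pi\) with \(\pi_{s^*}\) on the same demand path. I claim that pathwise either \(y_t^{s^*}=s^*\), or \(y_t^{s^*}=w_t\leq y_t^\pi\).

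\emph{Case \(y_t^{s^*}=s^*\).} Then \(g(y_t^\pi)\geq g(s^*)\) because \(s^*\) minimizes \(g\).

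\emph{Case \(w_t>s^*\).} Any policy's inventory is at least \(x_1\) minus the cumulative demand, since orders are nonnegative. This holds under backlog, and under lost sales as long as \(w_t>0\). So \(y_t^\pi\geq x_t^\pi\geq w_t>s^*\), and monotonicity of \(g\) to the right of \(s^*\) gives \(g(y_t^\pi)\geq g(w_t)\).

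Taking expectations and summing over \(t\) gives \(V_1^\pi(x_1)\geq V_1^{\pi_{s^*}}(x_1)\), so \(\pi_{s^*}\) is optimal. This step is consistent with Lemma~\ref{lemma:myopic base stock}, but it is self-contained.

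\textbf{Step 3: the per-period gap.} By Steps 1 and 2, \(V_1^{\pi_s}(x_1)-V_1^*(x_1)=\sum_t\mathbb E[g(\max\{s,w_t\})-g(\max\{s^*,w_t\})]\). It therefore suffices to show the deterministic inequality
\[
g(\max\{s,w\})-g(\max\{s^*,w\})\leq g(s)-g(s^*)
\]
for every real \(w\). I will split on the position of \(w\).

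\begin{itemize}
\item If \(w\leq\min\{s,s^*\}\), the left side equals the right side exactly.
\item If \(w\geq\max\{s,s^*\}\), the left side is zero, and the right side is nonnegative.
\item If \(s<w<s^*\), the left side is \(g(w)-g(s^*)\). Convexity gives \(g(w)\leq g(s)\), so the left side is at most the right side.
\item If \(s^*<w<s\), the left side is \(g(s)-g(w)\). Since \(g(w)\geq g(s^*)\), it is at most \(g(s)-g(s^*)\).
\end{itemize}

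Summing over \(T\) periods gives the uniform bound over \(x_1\in[0,\bar D]\).

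\textbf{Step 4: equality.} If \(x_1\leq\min\{s,s^*\}\), then \(w_t\leq x_1\leq\min\{s,s^*\}\) for every \(t\), so only the first case of Step 3 occurs and every period contributes exactly \(g(s)-g(s^*)\).

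The main obstacle is Step 2 under lost sales: the comparison \(x_t^\pi\geq w_t\) must be checked only while \(w_t>s^*\geq0\), where truncation at zero does not bind. I would handle this by induction on \(t\), stopping at the first period in which \(w_t\leq s^*\). After that period, \(\pi_{s^*}\) sits at \(s^*\) and the comparison is trivial.
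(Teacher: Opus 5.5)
Your proof is correct. For the gap bound you use the same coupling as the paper. Your closed form \(y_t^s=\max\{s,w_t\}\) and four-case inequality make precise what the paper says verbally: the two order-up-to levels agree until one policy first orders to \(\max\{s,s^*\}\), after which the other level lies between the two thresholds.

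For the optimality of \(\pi_{s^*}\) you take a genuinely different route.

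\emph{The paper's route.} It argues through the Bellman recursion. From a state at most \(s^*\), ordering to \(s^*\) attains the per-period floor \(g(s^*)\) forever. Above \(s^*\), \(g\) is nondecreasing, and the continuation value is nondecreasing in the state by backward induction. This keeps the argument inside the dynamic-programming framework of Lemma~\ref{lemma:myopic base stock}.

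\emph{Your route.} You compare an arbitrary history-dependent randomized policy with \(\pi_{s^*}\) pathwise, using the no-order envelope \(w_t\). This gives \(g(y_t^\pi)\geq g(y_t^{s^*})\) in every period on every sample path. It needs no monotonicity property of value functions, treats backlog and lost sales in one stroke, and yields a per-period domination that is stronger than comparing total costs.

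Two small remarks. First, the lost-sales ``obstacle'' you flag is vacuous. Since \((u)^+\geq u\) and feasibility gives \(y_t^\pi\geq x_t^\pi\), you have \(x_{t+1}^\pi\geq y_t^\pi-D_t\geq x_t^\pi-D_t\) in both models. Hence \(x_t^\pi\geq w_t\) holds unconditionally, and no stopping argument is needed.

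Second, feasible actions may exceed \(\bar D\). Step 2 therefore needs \(g\) nondecreasing on \([s^*,\infty)\) and \(g\geq g(s^*)\) on all of \([0,\infty)\), not only on \([s^*,\bar D]\) as you first state. Both hold because \(g(y)=h(y-\mathbb E D)\) for \(y\geq\bar D\), so \(g\) is convex with slope \(h>0\) there and \(s^*\) is a global minimizer.
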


\begin{proof}{Proof.}
{First, \(\pi_{s^*}\) is optimal from every nonnegative state. If the
current state is at most \(s^*\), ordering to \(s^*\) attains the universal
per-period lower bound \(g(s^*)\), now and after every future demand. If the
current state exceeds \(s^*\), convexity makes \(g\) nondecreasing on
\([s^*,\infty)\). The continuation value is nondecreasing in the state by
backward induction, so ordering above the current state cannot improve
either the current or continuation term. Thus the action
\(\max\{x,s^*\}\) is optimal.

For the uniform comparison, couple \(\pi_s\) and \(\pi_{s^*}\) with the
same demand sequence. Their order-up-to levels agree until the first time
one policy orders to the larger of \(s\) and \(s^*\). Thereafter the policy
with the larger threshold orders to that threshold in every period, while
the other policy's order-up-to level remains between the two thresholds.
Because \(g\) is convex and \(s^*\) minimizes it, the candidate policy's
conditional expected excess stage cost is at most
\(g(s)-g(s^*)\) in every period. Summing proves the uniform inequality.
When \(x_1\leq\min\{s,s^*\}\), each policy orders to its threshold in every
period, so its expected cost is respectively \(Tg(s)\) and \(Tg(s^*)\),
which proves equality. \Halmos}
\end{proof}

\begin{corollary}[Pooled empirical newsvendor policy]
\label{cor:stationary policy learning}
{Under the conditions of Proposition
\ref{prop:stationary policy value}, let \(\widehat s_M\) minimize the
empirical cost
\(\widehat g_M(s)=M^{-1}\sum_{j=1}^M[h(s-D^j)^++p(D^j-s)^+]\)
from pooled IID observations \(D^1,\ldots,D^M\).
For every \(0<\eta<1\), with probability at least \(1-\eta\),
\[
\sup_{x_1\in[0,\bar D]}
\left[V_1^{\pi_{\widehat s_M}}(x_1)-V_1^*(x_1)\right]
\leq
2T\max\{h,p\}\bar D
\sqrt{\frac{\log(2/\eta)}{2M}}.
\]
Consequently, \(M\geq2\max\{h,p\}^2\bar D^2T^2\epsilon^{-2}\log(2/\eta)\)
suffices for \(\epsilon\)-optimality.}
\end{corollary}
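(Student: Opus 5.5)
The corollary follows by combining Proposition~\ref{prop:stationary policy value} with a uniform deviation bound for the one-dimensional empirical newsvendor cost. The uniform regret bound of that proposition, applied with \(s=\widehat s_M\) (after checking that \(\widehat s_M\in[0,\bar D]\)), gives
\[
\sup_{x_1\in[0,\bar D]}\bigl[V_1^{\pi_{\widehat s_M}}(x_1)-V_1^*(x_1)\bigr]\leq T\bigl[g(\widehat s_M)-g(s^*)\bigr].
\]
The range check is immediate. The empirical cost \(\widehat g_M\) is convex and piecewise linear with kinks only at the sample points, which lie in \([0,\bar D]\). It is decreasing to the left of \(0\) with slope \(-p\), and increasing to the right of \(\bar D\) with slope \(h\). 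So a minimizer can be chosen in \([0,\bar D]\), and I would take \(\widehat s_M\) to be such a minimizer, for instance the smallest one.

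It then suffices to show that \(g(\widehat s_M)-g(s^*)\leq 2\max\{h,p\}\bar D\sqrt{\log(2/\eta)/(2M)}\) with probability at least \(1-\eta\). The standard ERM decomposition gives
\[
g(\widehat s_M)-g(s^*)\leq 2\sup_{s\in[0,\bar D]}|\widehat g_M(s)-g(s)|,
\]
so I need a uniform deviation bound for the newsvendor cost. The sharp constant in the claim, which has no entropy term, suggests using the DKW inequality rather than a chaining argument. Write \(c(s,d)=h(s-d)^++p(d-s)^+\). Integration by parts gives
\[
g(s)=h\int_0^s F(u)\,du+p\int_s^{\bar D}(1-F(u))\,du,
\]
and \(\widehat g_M\) has the same form with the empirical CDF \(\widehat F_M\) in place of \(F\). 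Hence
\[
|\widehat g_M(s)-g(s)|\leq (hs+p(\bar D-s))\,\|\widehat F_M-F\|_\infty\leq \max\{h,p\}\bar D\,\|\widehat F_M-F\|_\infty .
\]
Massart's DKW inequality, \(\mathbb P(\|\widehat F_M-F\|_\infty>u)\leq 2e^{-2Mu^2}\), evaluated at \(u=\sqrt{\log(2/\eta)/(2M)}\), gives the deviation bound with probability at least \(1-\eta\). Multiplying by \(2T\) yields the displayed regret bound.

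For the sample-size consequence, I set \(2T\max\{h,p\}\bar D\sqrt{\log(2/\eta)/(2M)}\leq\epsilon\) and solve for \(M\). This gives \(M\geq 2\max\{h,p\}^2\bar D^2T^2\epsilon^{-2}\log(2/\eta)\), exactly as stated.

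The main obstacle is the integration-by-parts representation: it must hold for arbitrary laws on \([0,\bar D]\), including atoms, and it must apply equally to the empirical law. Both sides are Lebesgue integrals of CDFs, so atoms only affect the integrand on a null set and should cause no difficulty. A secondary point is that Proposition~\ref{prop:stationary policy value} requires \(s\in[0,\bar D]\), which is why the range check above is needed. If the DKW route ran into trouble, the fallback would be a bounded-differences argument plus a Rademacher bound on this Lipschitz class. That fallback only gives the rate up to a universal constant, not the exact stated constant.
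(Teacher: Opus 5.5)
Your proposal is correct and essentially matches the paper's proof: the CDF identity bounds $\sup_s|\widehat g_M(s)-g(s)|$ by $\max\{h,p\}\bar D\|\widehat F_M-F\|_\infty$, the empirical-minimizer inequality supplies the factor $2$, and DKW together with Proposition~\ref{prop:stationary policy value} supplies the factor $T$. Your explicit check that $\widehat s_M\in[0,\bar D]$, which follows because $\widehat g_M$ has slope $-p$ below $0$ and slope $h$ above $\bar D$, fills in a detail the paper leaves implicit.
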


\begin{proof}{Proof.}
{Let \(F\) and \(\widehat F_M\) be the population and empirical
distribution functions. For \(s\in[0,\bar D]\), the CDF identity
\(g(s)=h\int_0^s F(u)\,du+p\int_s^{\bar D}[1-F(u)]\,du\),
and its empirical counterpart, give
\[
\sup_{s\in[0,\bar D]}|\widehat g_M(s)-g(s)|
\leq\max\{h,p\}\bar D\|\widehat F_M-F\|_\infty.
\]
The empirical-minimizer inequality therefore bounds
\(g(\widehat s_M)-g(s^*)\) by twice this quantity. The
Dvoretzky--Kiefer--Wolfowitz inequality and Proposition
\ref{prop:stationary policy value} give the result. \Halmos}
\end{proof}

\subsection{Optimal-Value Estimation and the Stationary Separation}
\label{section:stationary-separation}
Let \(\mathcal M_{\rm val,stat}^*(T,\epsilon,\eta;1)\) be the least
number of pooled IID demand observations for which one estimator of
\(V_1^*(1)\) is \(\epsilon\)-accurate with probability at least
\(1-\eta\), uniformly over backlog instances with demand on \([0,1]\)
and known, time-homogeneous costs \(0<h,p\leq1\). For \(T\geq8\),
\(0<\epsilon\leq T/1024\), and \(0<\eta\leq1/4\),
\begin{equation}
\label{eq:stationary-value-minimax}
\mathcal M_{\rm val,stat}^*(T,\epsilon,\eta;1)
=\Theta\!\left(T^3\epsilon^{-2}\log(1/\eta)\right).
\end{equation}
Indeed, partition pooled IID data into \(T\) independent blocks of
\(N=\lfloor M/T\rfloor\) observations, one per period. The fixed-start
bound \eqref{eq:fixed-start-bound} gives the upper order, without a
\(\log T\) factor; integer rounding adds only \(O(T)\) samples.
Theorem~\ref{theorem:stationary value lower bound} gives the lower order
on this same class. At fixed confidence, on this same class and initial state,
\[
\boxed{\begin{aligned}
\text{Policy learning at }x_1=1:&\quad O(T^2\epsilon^{-2}),\\
\text{Optimal-value estimation at }x_1=1:&\quad \Theta(T^3\epsilon^{-2}).
\end{aligned}}
\]
The policy line follows from Corollary~\ref{cor:stationary policy learning}.

\paragraph{Empty-start optimal-value estimation.}
Under the stationary assumptions of Proposition
\ref{prop:stationary policy value}, starting from \(x_1=0\) gives
\(V_1^*(0)=T\min_{s\in[0,\bar D]}g(s)\). Define
\(\widehat V_1(0)=T\min_s\widehat g_M(s)\), where
\(\widehat g_M\) is the pooled empirical newsvendor cost. Infimum stability
and the identity
\(g(s)=h\int_0^s F(u)\,du+p\int_s^{\bar D}(1-F(u))\,du\) give
\[
|\widehat V_1(0)-V_1^*(0)|
\leq T\sup_s|\widehat g_M(s)-g(s)|
\leq T\max\{h,p\}\bar D\,\|\widehat F_M-F\|_\infty.
\]
The Dvoretzky--Kiefer--Wolfowitz inequality therefore bounds this error by
\(T\max\{h,p\}\bar D\sqrt{\log(2/\eta)/(2M)}\) with probability
at least \(1-\eta\). Empty-start valuation thus uses
\(O(T^2\epsilon^{-2}\log(1/\eta))\) pooled observations at bounded
cost and demand scales. The inherited unit in Theorem~\ref{theorem:stationary value lower bound}
creates the separation as desired.

\section{Learning from Censored Sales}
\label{section: censored}
We recover fully observed truncated demands from qualified sales logs.
Carry-safe coverage makes the truncated problem decision-equivalent to the
original one, allowing the benchmark in Section~\ref{section: VI} to attain
the censored policy-learning limit.

\subsection{Why Coverage Is Necessary}
An observable boundary can limit decision identification even with unlimited
data \citep{bu2020offline,hssaine2024censored}. The following single-period
baseline motivates our carry-safe multistage reduction.

\begin{proposition}[Single-period nonidentification]
\label{prop:unobserved-tail}
Let \(T=1\), \(h,p>0\), \(0\leq\lambda<\bar D\), and let
\(\mathcal P\) be all demand laws supported on \([0,\bar D]\).
For \(P\in\mathcal P\), define
\(C_P(y)=\mathbb E_{D\sim P}[h(y-D)^++p(D-y)^+]\).
A learner \(\mathcal A\) observes \(Z_{1:N}=(Z_1,\ldots,Z_N)\),
where \(Z_j=\min\{D^j,\lambda\}\) and \(D^j\) are IID from \(P\),
and returns \(\widehat y_{\mathcal A}(Z_{1:N})\in[0,\bar D]\).
For every \(N\),
\[
\inf_{\mathcal A}\sup_{P\in\mathcal P}
\mathbb E_{P}\!\left[C_P(\widehat y_{\mathcal A})
-\min_{y\in[0,\bar D]}C_P(y)\right]
\geq\frac{\min\{h,p\}}2(\bar D-\lambda),
\]
where the expectation includes the sample and learner randomization.
\end{proposition}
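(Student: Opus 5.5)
The plan is a two-point argument. I will build two demand laws that produce identically distributed censored samples but have optimal decisions separated by the whole unobserved interval \([\lambda,\bar D]\).

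Take \(P=\delta_\lambda\), a point mass at the censoring level, and \(Q=\delta_{\bar D}\), a point mass at the upper support bound. Both lie in \(\mathcal P\) because \(0\leq\lambda<\bar D\).

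Under either law every observation equals \(Z_j=\min\{D^j,\lambda\}=\lambda\) almost surely. So the sample \(Z_{1:N}\) has the same law under \(P\) and \(Q\) for every \(N\). Since the learner's internal randomization does not depend on the demand law, the output \(\widehat y_{\mathcal A}\) has one common law \(\nu\) on \([0,\bar D]\) under both instances.

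Next I compute the two regret functions explicitly. We have \(C_P(y)=h(y-\lambda)^++p(\lambda-y)^+\) with minimum \(0\) at \(y=\lambda\), so the regret under \(P\) is \(r_P(y)=h(y-\lambda)^++p(\lambda-y)^+\). We also have \(C_Q(y)=h(y-\bar D)^++p(\bar D-y)^+=p(\bar D-y)\) on \([0,\bar D]\), with minimum \(0\) at \(\bar D\), so \(r_Q(y)=p(\bar D-y)\). The key pointwise inequality is
\[
r_P(y)+r_Q(y)\geq\min\{h,p\}(\bar D-\lambda)\qquad\text{for every }y\in[0,\bar D],
\]
which I check in two cases:
\begin{itemize}
\item If \(y\geq\lambda\), the sum is \(h(y-\lambda)+p(\bar D-y)\geq\min\{h,p\}(\bar D-\lambda)\).
\item If \(y<\lambda\), then \(r_Q(y)=p(\bar D-y)\geq p(\bar D-\lambda)\) on its own.
\end{itemize}

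Finally I integrate against the common output law \(\nu\). This gives \(\mathbb E_P[r_P(\widehat y)]+\mathbb E_Q[r_Q(\widehat y)]=\int(r_P+r_Q)\,d\nu\geq\min\{h,p\}(\bar D-\lambda)\). Hence the larger of the two expected regrets is at least half of that, and this holds for every learner and every \(N\), which is the stated bound.

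There is no real obstacle here. The only point needing care is that the learner's randomization is independent of the instance, so the output laws genuinely coincide. The case \(y<\lambda\) must also be handled rather than assuming \(\widehat y\in[\lambda,\bar D]\).
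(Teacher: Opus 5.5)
Your proposal is correct and follows essentially the same argument as the paper: the point masses \(P=\delta_\lambda\) and \(Q=\delta_{\bar D}\), the common output law, and the pointwise bound \(C_P(y)+C_Q(y)\geq\min\{h,p\}(\bar D-\lambda)\) with the same case split at \(y=\lambda\). Averaging against the common law then gives the factor \(1/2\), exactly as in the paper.
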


\begin{proof}{Proof.}
Take the point masses \(P=\delta_\lambda\) and \(Q=\delta_{\bar D}\).
Every observed sale equals \(\lambda\), so the learner's output has a
common law. Both optimal costs are zero. For every \(y\in[0,\bar D]\),
\[
C_P(y)+C_Q(y)\geq\min\{h,p\}(\bar D-\lambda).
\]
For \(y\in[\lambda,\bar D]\), this follows from
\(h(y-\lambda)+p(\bar D-y)\); for \(y<\lambda\), the second cost
alone suffices. Averaging over the common output law and taking the larger
of the two expected regrets proves the bound. \Halmos
\end{proof}

\subsection{Carry-Safe Decision Coverage}
\label{section:decision-coverage}
Choose \(\lambda_t\in[0,\bar D]\) before observing fitting demands, or
using only the complete boundary array under Section~\ref{section:usable-fraction}.
An independent pilot is also allowed if, conditional on the pilot, design
information, and complete boundaries, fitting demands remain independent
with laws \(P_t\). Selection from fitting demands is not covered. Define
\begin{equation}
\label{eq:coverage-cap}
a_t:=\min_{s=t,\ldots,T}\lambda_s,\qquad
q_t:=\frac{p_t}{h_t+p_t},\qquad t\in[T].
\end{equation}
These are the largest nondecreasing caps with \(a_t\leq\lambda_t\).
Thus stock carried from an action below \(a_t\) cannot exceed
\(a_{t+1}\). The usable set is directly
\(\mathcal J_t=\{j\in[N_t]:b_t^j\geq a_t\}\), with count
\(m_t=|\mathcal J_t|\), as in \eqref{eq:usable-fraction}.

\begin{theorem}[Carry-safe decision coverage]
\label{theorem:decision coverage}
Suppose the conditional noninformative-censoring assumption holds,
\(x_1\in[0,a_1]\), and
\begin{equation}
\label{eq:coverage-condition}
F_t^-(a_t)>q_t,\qquad t\in[T].
\end{equation}
Let \(X_t^*\) and \(Y_t^*\) denote the inventory before and after ordering
under \(\pi^{S^*}\). Then \(S_t^*<a_t\), and
this optimal lost-sales trajectory satisfies \(X_t^*\leq a_t\) and
\(Y_t^*\leq a_t\) in every period. Restricting each Bellman minimization
to \(x\leq y\leq a_t\) preserves the full-information optimal value
for every \(t\in[T]\) and \(x\in[0,a_t]\).
\end{theorem}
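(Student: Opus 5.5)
The plan has three steps: show $S_t^*<a_t$ from Lemma~\ref{lemma:myopic base stock}; propagate the bound along the optimal trajectory using that the caps are nondecreasing; then argue that truncating the action set is harmless by convexity of $U_t$.

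\textbf{Step 1: $S_t^*<a_t$.}
By Lemma~\ref{lemma:myopic base stock}, $S_t^*\le S_t^m$. It therefore suffices to show that every minimizer of $C_t$ on $[0,\bar D]$ lies strictly below $a_t$. The one-period cost $C_t$ is convex with right derivative
\[
C_t'(y^+)=h_tF_t(y)-p_t(1-F_t(y))=(h_t+p_t)F_t(y)-p_t .
\]
For any $y\ge a_t$ we have $F_t(y)\ge F_t^-(a_t)>q_t$, so the left derivative at $y$ is
\[
(h_t+p_t)F_t^-(y)-p_t\ge (h_t+p_t)F_t^-(a_t)-p_t>0 .
\]
Hence $C_t$ is strictly increasing on $[a_t,\bar D]$.

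To get strictness I would use left derivatives. The left derivative at $a_t$ is positive, so $C_t(a_t-\delta)<C_t(a_t)$ for small $\delta>0$. Combined with convexity, every minimizer is then strictly less than $a_t$, so $S_t^m<a_t$. The case $a_t=0$ needs a separate check: $F_t^-(0)=0$ cannot exceed $q_t>0$, so condition \eqref{eq:coverage-condition} forces $a_t>0$.

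\textbf{Step 2: the optimal trajectory stays below the caps.}
Induct on $t$. At $t=1$ the start condition gives $X_1^*=x_1\le a_1$. If $X_t^*\le a_t$, then $Y_t^*=\max\{X_t^*,S_t^*\}\le a_t$ by Step~1. Lost-sales dynamics give
\[
X_{t+1}^*=(Y_t^*-D_t)^+\le Y_t^*\le a_t\le a_{t+1},
\]
where the last inequality uses that the caps in \eqref{eq:coverage-cap} are nondecreasing.

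\textbf{Step 3: restricting actions to $x\le y\le a_t$ preserves the optimal value.}
I would argue by backward induction on $t$, showing that $\min_{x\le y\le a_t}U_t(y)=V_t^*(x)$ for every $x\in[0,a_t]$. The proof of Lemma~\ref{lemma:myopic base stock} shows that $U_t$ is convex and that the constrained minimizer over $y\ge x$ is $\max\{x,S_t^*\}$. For $x\le a_t$ this minimizer is at most $a_t$ by Step~1, so it lies in the restricted set and the minimum is unchanged.

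The continuation term $\mathbb E[V_{t+1}^*((y-D_t)^+)]$ is evaluated only at states $(y-D_t)^+\le y\le a_t\le a_{t+1}$. By the induction hypothesis, the restricted recursion coincides with $V_{t+1}^*$ at those states, so the restricted recursion reproduces $U_t$ on $[x,a_t]$. The base case is $V_{T+1}^*=0$. This uses only the full-information Bellman equations, so the noninformative-censoring assumption plays no role in this statement beyond fixing $a_t$ independently of the fitting demands.

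\textbf{Main obstacle.}
I expect the delicate point to be the strict inequality $S_t^*<a_t$ when $F_t$ has atoms. This is why the condition is stated with $F_t^-$, and why the left-derivative argument in Step~1 is needed rather than a right-derivative one. I would verify that convexity together with a positive left derivative at $a_t$ excludes $a_t$ itself as a minimizer, even when $C_t$ has a flat segment ending exactly at $a_t$. It does: a flat segment ending at $a_t$ would force the left derivative at $a_t$ to be zero, contradicting positivity.
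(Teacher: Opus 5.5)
Your proof is correct and follows the paper's route. You obtain $S_t^*\le S_t^m<a_t$ from Lemma~\ref{lemma:myopic base stock}, induct along the trajectory using the nondecreasing caps, and conclude that the restriction excludes no optimal action. Your left-derivative check at $a_t$ is equivalent to the paper's observation that $F_t(y)>q_t$ for some $y<a_t$, so it is convenient rather than necessary, and your explicit backward induction on the restricted recursion spells out the paper's one-line final step.
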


\begin{proof}{Proof.}
Condition \eqref{eq:coverage-condition} implies that some \(y<a_t\)
satisfies \(F_t(y)>q_t\). Every myopic minimizer, including the largest
one \(S_t^m\), then lies below \(a_t\). Lemma
\ref{lemma:myopic base stock} gives \(S_t^*\leq S_t^m<a_t\).
If \(X_t^*\leq a_t\), then
\[
Y_t^*=\max\{X_t^*,S_t^*\}\leq a_t,\qquad
X_{t+1}^*=(Y_t^*-D_t)^+\leq a_t\leq a_{t+1}.
\]
Induction establishes the trajectory claim from any admissible tail state.
The restriction therefore excludes no such optimal trajectory and preserves
its value. \Halmos
\end{proof}

\paragraph{Why periodwise coverage alone does not ensure cap closure.}
Take two periods, unit holding and shortage costs, \(x_1=0\), and
\(\bar D=1\). Let \(D_1=3/4\) with probability \(3/4\) and zero
otherwise, while \(D_2=0\) surely. Set \(\lambda_1=1\) and
\(\lambda_2=1/4\). Both periodwise CDF values equal one, above
\(q_t=1/2\). Nevertheless, \(S_2^*=0\) and the first-period objective is
\[
U_1(y)=
\begin{cases}
9/16-y/4,&0\leq y\leq3/4,\\
2y-9/8,&3/4\leq y\leq1.
\end{cases}
\]
Its unique minimizer is \(S_1^*=3/4\). On the zero-demand event,
\(X_2^*=3/4>\lambda_2\): the later reported region is not closed
under optimal carryover. The carry-safe cap is instead \(a_1=a_2=1/4\),
and \(F_1^-(a_1)=1/4\) fails \eqref{eq:coverage-condition}, as it should.
This example concerns failure of cap closure, not impossibility of learning
this particular instance.

\subsection{Exact Truncation and Decision Equivalence}
\begin{proposition}[Exact truncation and policy-gap preservation]
\label{prop:exact-truncation}
Under the conditions of Theorem~\ref{theorem:decision coverage}, define
\(W_t=\min\{D_t,a_t\}\) and
\(\kappa_t=\sum_{s=t}^T p_s\mathbb E[D_s-W_s]\).
Write \(V_{t,D}^{\pi},V_{t,W}^{\pi}\) for lost-sales policy values under
the original and truncated demands, and use a superscript \(*\) for their
optimal values. For every \(t\in[T]\), \(x\in[0,a_t]\), and policy
\(\pi\) whose actions remain within the caps,
\[
V_{t,D}^{\pi}(x)=V_{t,W}^{\pi}(x)+\kappa_t,\qquad
V_{t,D}^{\pi}(x)-V_{t,D}^*(x)
=V_{t,W}^{\pi}(x)-V_{t,W}^*(x).
\]
The two systems have identical sales and inventory trajectories when coupled
with the same demands and policy randomization.
\end{proposition}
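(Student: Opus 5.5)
The plan is a pathwise coupling argument under a fixed policy, with no probabilistic estimates. Drive both systems with the same demand draws $D_s$ and the same policy randomization, and compare them along every sample path. Both start at the same $x\in[0,a_t]$, and the policy's actions stay within the caps, so $y_s\le a_s$ at every period. The key observation is this: whenever $y\le a_s$, we have $\min\{D_s,y\}=\min\{W_s,y\}$ and $(y-D_s)^+=(y-W_s)^+$. Indeed, if $D_s\le a_s$ then $W_s=D_s$. If $D_s>a_s$ then $W_s=a_s\ge y$, and both minima equal $y$ while both next states are zero.

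\textbf{Step 1: identical trajectories.} I will show by induction on $s=t,\ldots,T$ that the states, actions and sales agree in the two systems. The policy observes only sales and past states, so identical histories give identical (coupled) actions. The observation above then gives identical sales and identical next states. One check is needed: the policy's feasibility constraint $y_s\ge x_s$ is the same in both systems because the states agree. This induction proves the final sentence of the proposition.

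\textbf{Step 2: stage-cost decomposition.} For $y\le a_s$ I will verify the pathwise identity
\[
c_s(y,D_s)=c_s(y,W_s)+p_s(D_s-W_s).
\]
The holding term is unchanged because $(y-D_s)^+=(y-W_s)^+$. For the shortage term, if $D_s\le a_s$ both extra terms vanish. Otherwise
\[
(D_s-y)^+=D_s-y=(a_s-y)+(D_s-a_s)=(W_s-y)^++(D_s-W_s).
\]
Summing over $s\ge t$ and taking expectations gives $V^\pi_{t,D}(x)=V^\pi_{t,W}(x)+\kappa_t$. Expectations are legitimate because $D_s-W_s$ depends on $D_s$ alone and the cost $p_s$ is deterministic, so $\mathbb E[p_s(D_s-W_s)]$ is policy-free.

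\textbf{Step 3: optimal values and policy gaps.} The main obstacle is identifying $V^*_{t,D}$ and $V^*_{t,W}$ as infima over the same cap-respecting class. This is needed so that subtracting optimal values cancels $\kappa_t$.
\begin{itemize}
\item \emph{Original system.} Theorem~\ref{theorem:decision coverage} says that restricting actions to $x\le y\le a_s$ preserves $V^*_{t,D}(x)$ for $x\in[0,a_t]$. Hence $V^*_{t,D}$ is the infimum of $V^\pi_{t,D}$ over cap-respecting policies.
\item \emph{Truncated system.} Its demands are supported on $[0,a_s]$, and $a_s$ is nondecreasing, so the argument of Lemma~\ref{lemma:myopic base stock} applies with $\bar D$ replaced by the caps. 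Concretely, $W_s$ has support in $[0,a_s]$, so the myopic minimizers lie in $[0,a_s]$ and $S^*_{s,W}\le a_s$. The states then stay below the caps: $X_{s+1}\le a_s\le a_{s+1}$. Thus an optimal policy for the $W$-system is cap-respecting, and $V^*_{t,W}$ is also the infimum over that class.
\end{itemize}
Since Step 2 shows $V^\pi_{t,D}-V^\pi_{t,W}=\kappa_t$ for every policy in the common class, taking infima gives $V^*_{t,D}(x)=V^*_{t,W}(x)+\kappa_t$. Subtracting this from the Step 2 identity yields the equality of policy gaps.
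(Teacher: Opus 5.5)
Your proposal is correct and follows the same route as the paper. The pathwise identities for \(y\leq a_t\) give identical sales and inventory trajectories and the policy-independent stage-cost shift \(p_t\mathbb E[D_t-W_t]\). Cap-feasible optima in both systems (Theorem~\ref{theorem:decision coverage} for the original demands, and Lemma~\ref{lemma:myopic base stock} with truncated support and nondecreasing caps for the truncated ones) then let you minimize over a common policy class, so \(\kappa_t\) cancels in the gaps.
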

\begin{proof}{Proof.}
For every \(y\leq a_t\), pathwise
\[
(y-D_t)^+=(y-W_t)^+,\qquad
(D_t-y)^+-(W_t-y)^+=D_t-W_t.
\]
Also \(\min\{D_t,y\}=\min\{W_t,y\}\), so the policy receives the same
sales history and chooses the same actions in both systems. Writing
\(C_t^D(y)=\mathbb E[c_t(y,D_t)]\) and
\(C_t^W(y)=\mathbb E[c_t(y,W_t)]\), the expected stage costs satisfy
\begin{equation}
\label{eq:truncation-constant}
C_t^D(y)-C_t^W(y)=p_t\mathbb E[D_t-W_t],
\qquad y\leq a_t.
\end{equation}
Summing gives the policy-independent shift \(\kappa_t\).
Theorem~\ref{theorem:decision coverage} provides a cap-feasible original
optimum. Lemma~\ref{lemma:myopic base stock}, truncated support
\([0,a_t]\), and nondecreasing caps provide a cap-feasible truncated optimum.
Minimizing over these policies gives
\(V_{t,D}^*(x)=V_{t,W}^*(x)+\kappa_t\), proving the gap identity. \Halmos
\end{proof}

\subsection{Sharp Learning Rate under Prescribed Archives}
\label{section:prescribed-archive-rate}
In the prescribed-archive setting of Section~\ref{section:censored-policy-lower},
qualified logs recover the truncated demand samples needed by Product ERM.

\begin{theorem}[Policy learning under maintained coverage]
\label{theorem:coverage product erm}
{Suppose the conditions of Theorem \ref{theorem:decision coverage}
hold, and use \(\mathcal J_t,m_t,W_t^j\) from Section
\ref{section:usable-fraction}. Condition on the complete recorded boundary
array. For a positive integer \(n\), if \(m_t\geq n\) in every period,
choose any \(n\) indices from each \(\mathcal J_t\) using only the boundaries
and form the empirical marginals of \(W_t^j\). For fixed
\(x_1\in[0,a_1]\), let \(\widehat\pi^\times\) be Product ERM with
\(S_1\geq x_1\), equivalently empirical DP under these marginals.
There is a universal constant \(C>0\) such that, for any
\(0<\delta<1\),
\[
n\geq
C\frac{C_c^2\bar D^2T^2}{\epsilon^2}
\left(1+\log\frac{1}{\delta}\right)
\]
implies
\[
V_1^{\widehat\pi^\times}(x_1)-V_1^*(x_1)\leq\epsilon
\]
with conditional probability at least \(1-\delta\). In particular, if
\(N\) raw observations are available in every period and
\(m_t\geq rN\) for every \(t\), the sufficient raw total is
\[
TN=O\!\left(\frac{T}{r}
+\frac{C_c^2\bar D^2T^3}{r\epsilon^2}\left(1+\log\frac1\delta\right)\right).
\]
}
\end{theorem}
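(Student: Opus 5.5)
The plan is to reduce censored learning to fully observed learning on the truncated system of Proposition~\ref{prop:exact-truncation}, and then import the product-empirical transfer of Lemma~\ref{lemma:product empirical concentration}. First I condition on the complete boundary array \(\mathcal B\). By conditional noninformative censoring, and because the selected indices in each \(\mathcal J_t\) depend only on \(\mathcal B\), the chosen \(W_t^j=\min\{Z_t^j,a_t\}=\min\{D_t^j,a_t\}\) are conditionally independent within and across periods. Their laws are those of \(W_t=\min\{D_t,a_t\}\), supported on \([0,a_t]\subseteq[0,\bar D]\). Hence, conditionally, the learner holds exactly \(n\) IID fully observed samples per period from the truncated product law \(P^W=\bigotimes_t\mathrm{Law}(W_t)\).

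Next I apply the uniform-convergence machinery to the truncated lost-sales system. The class consists of normalized base-stock losses \(\ell_S\) with \(S\in[0,\bar D]^T\) and \(S_1\geq x_1\). Since thresholds are nonnegative and \(x_1\leq a_1\), starting from \(x_1\) is equivalent to starting empty with \(\max\{S_1,x_1\}\). As noted after \eqref{eq:horizon-free-ingredient}, the lost-sales recursion coincides with the backlog post-order recursion for truncated demands, so \eqref{eq:horizon-free-ingredient} gives \(\mu_n=C_0C_c\bar D/\sqrt n\) for the normalized loss, with range \(R\leq C_c\bar D\). Lemma~\ref{lemma:product empirical concentration} then yields, with conditional probability at least \(1-\delta\), that the Product ERM policy satisfies \(V_{1,W}^{\widehat\pi^\times}(x_1)-\inf_{S}V_{1,W}^{\pi^S}(x_1)\leq 2T C_c\bar D\bigl(C_0+\sqrt{\log(1/\delta)/2}\bigr)/\sqrt n\). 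The factor \(T\) undoes the normalization. The infimum over base-stock vectors equals \(V_{1,W}^*(x_1)\) by Lemma~\ref{lemma:myopic base stock} applied to the truncated system. The stated \(n\), with a large enough \(C\), makes this at most \(\epsilon\).

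The main obstacle is transferring this truncated gap back to the original system through Proposition~\ref{prop:exact-truncation}, which requires the deployed policy to be cap-feasible, i.e.\ every action must be at most \(a_t\). Empirical DP under marginals supported on \([0,a_t]\) has myopic minimizers at most \(a_t\). Taking the smallest empirical minimizer, as in Lemma~\ref{lemma:myopic base stock}, gives \(\widehat S_t\leq a_t\). Starting from \(x_1\leq a_1\), with nondecreasing caps and \(X_{t+1}\leq Y_t\leq a_t\leq a_{t+1}\), induction keeps all actions within the caps. If Product ERM returned a threshold above \(a_t\), I would clip it to \(a_t\). This leaves the empirical objective no worse, because costs beyond the support maximum are increasing and carry no benefit, so an ERM representative with \(\widehat S_t\leq a_t\) exists. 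I would fix this representative as \(\widehat\pi^\times\).

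With cap feasibility, Proposition~\ref{prop:exact-truncation} gives \(V_1^{\widehat\pi^\times}(x_1)-V_1^*(x_1)=V_{1,W}^{\widehat\pi^\times}(x_1)-V_{1,W}^*(x_1)\leq\epsilon\). This holds conditionally on \(\mathcal B\), hence also unconditionally. For the raw total, \(m_t\geq rN\) means that \(N=\lceil n/r\rceil\leq n/r+1\) ensures \(m_t\geq n\). Multiplying by \(T\) gives \(TN=O(T/r+C_c^2\bar D^2T^3\epsilon^{-2}(1+\log(1/\delta))/r)\).
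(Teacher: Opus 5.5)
Your proposal is correct and follows the paper's proof essentially step for step: you condition on the boundary array to obtain IID truncated samples, apply \eqref{eq:horizon-free-ingredient} with Lemma~\ref{lemma:product empirical concentration} to the truncated system, use Lemma~\ref{lemma:myopic base stock} and the empirical support \([0,a_t]\) to keep empirical-DP thresholds within the caps, transfer the gap through Proposition~\ref{prop:exact-truncation}, and take \(N=\lceil n/r\rceil\). The clipping fallback is never triggered and would need the empirical DP, not monotone stage costs alone, to justify it: any empirical minimizer started at \(x_1\le a_1\) already has thresholds at most \(a_t\) because ordering above \(a_t\) is strictly suboptimal in the empirical DP, whereas for a non-optimal base-stock vector clipping can raise the cost.
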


\begin{proof}{Proof.}
For \(j\in\mathcal J_t\),
\(W_t^j=\min\{Z_t^j,a_t\}=\min\{D_t^j,a_t\}\), so the selected
variables are fully observed samples from \(W_t\).
Conditional on the complete boundary array, boundary-only selection preserves
within-period IID sampling and cross-period independence. By Lemma
\ref{lemma:myopic base stock} and empirical support \([0,a_t]\), empirical
DP thresholds do not exceed \(a_t\). Theorem~\ref{theorem:decision coverage}
gives a cap-feasible true optimum. Apply the full-observation argument of
Section~\ref{section:product-transfer} to the truncated laws, whose
normalized losses have range length at most \(C_c\bar D\).
The common-recursion bound \eqref{eq:horizon-free-ingredient} and Lemma
\ref{lemma:product empirical concentration} give, with conditional
probability at least \(1-\delta\),
\[
V_{1,W}^{\widehat\pi^\times}(x_1)-V_{1,W}^*(x_1)
\leq\frac{2C_c\bar D T}{\sqrt n}
\left(C_0+\sqrt{\frac{\log(1/\delta)}2}\right).
\]
Proposition~\ref{prop:exact-truncation} transfers this gap to the original
system. The stated choice of \(n\) makes it at most \(\epsilon\). If
\(m_t\geq rN\), taking
\(N=\lceil n/r\rceil\) proves the raw-observation order.
\Halmos
\end{proof}

\begin{corollary}[Minimax policy-learning rate under strict coverage]
\label{cor:coverage minimax}
In the prescribed-archive setting of Section~\ref{section:censored-policy-lower},
for even \(T\geq4\), \(r\in(0,1]\), and \(0<\epsilon\leq T/512\),
\[
\mathcal M_{\rm cov}^*(T,\epsilon,r)
=\Theta\!\left(\frac{T^3}{r\epsilon^2}\right).
\]
\end{corollary}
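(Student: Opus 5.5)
The corollary follows by pairing a lower bound and an upper bound, each on the same normalized prescribed-archive class with \(C_c=\bar D=1\), \(x_1=0\), and success probability \(3/4\).

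\emph{Lower bound.} This is exactly Proposition~\ref{prop:rounded-coverage-lower}, which gives \(\mathcal M_{\rm cov}^*(T,\epsilon,r)>T^3/(46137344\,r\epsilon^2)\) for even \(T\geq4\), \(r\in(0,1]\), and \(0<\epsilon\leq T/512\). We only need to check that its hard family belongs to the prescribed-archive class:
\begin{itemize}
\item the common cap \(7/8\) is constant, hence nondecreasing;
\item strict coverage \(F_t^-(7/8)>1/2=q_t\) holds in every period;
\item the boundaries are predetermined, so conditional noninformative censoring holds;
\item each period has at least \(\lceil rN\rceil\) logs reaching the cap.
\end{itemize}

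\emph{Upper bound.} We invoke Theorem~\ref{theorem:coverage product erm}, and first check its hypotheses. In the prescribed-archive setting the supplied caps satisfy \(a_t\leq a_{t+1}\) and \(F_t^-(a_t)>q_t\). Taking \(\lambda_t=a_t\) in \eqref{eq:coverage-cap} therefore reproduces the same caps, because the minimum over \(s\geq t\) of a nondecreasing sequence equals \(a_t\). The condition \(x_1=0\in[0,a_1]\) holds trivially, so Theorem~\ref{theorem:decision coverage} and Proposition~\ref{prop:exact-truncation} apply.

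We fix \(\delta=1/4\) and take \(n=\lceil C T^2\epsilon^{-2}(1+\log 4)\rceil\). Choosing \(N=\lceil n/r\rceil\) makes \(m_t\geq\lceil rN\rceil\geq n\) in every period. Product ERM on the \(n\) selected qualified logs per period is then \(\epsilon\)-optimal with conditional probability at least \(3/4\) given the boundary array. Since this holds for every boundary array, it also holds unconditionally and uniformly over the class. Hence
\[
\mathcal M_{\rm cov}^*=TN_{\rm cov}^*\leq T\lceil n/r\rceil=O\!\left(\frac{T}{r}+\frac{T^3}{r\epsilon^2}\right).
\]

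\emph{Absorbing the additive term.} The restriction \(\epsilon\leq T/512\) gives \(T^3/\epsilon^2\geq 512^2\,T\geq T\). The additive \(T/r\) is therefore dominated by \(T^3/(r\epsilon^2)\), and the upper bound is \(O(T^3/(r\epsilon^2))\). Together with the lower bound this gives \(\Theta(T^3/(r\epsilon^2))\).

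The only delicate point is matching definitions: \(\mathcal M^*_{\rm cov}\) is a least-\(N\) quantity over the same archive class in both directions. The lower-bound instances must lie in that class, and the upper-bound learner is allowed to know the caps, which the prescribed-archive setting explicitly supplies. With the checks above, everything else is routine.
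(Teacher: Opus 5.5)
Your proposal is correct and follows the paper's proof: necessity comes from Proposition~\ref{prop:rounded-coverage-lower}, and sufficiency from Theorem~\ref{theorem:coverage product erm} at confidence $\delta=1/4$ with $N=\lceil n/r\rceil$. The rounding term $T/r$ is absorbed because $\epsilon\leq T/512$ forces $T^3/\epsilon^2\geq T$; your explicit membership and hypothesis checks (constant cap, strict coverage, predetermined boundaries, $\lambda_t=a_t$ reproducing nondecreasing caps) spell out what the paper leaves implicit.
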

\begin{proof}{Proof.}
Proposition~\ref{prop:rounded-coverage-lower} gives necessity and
Theorem~\ref{theorem:coverage product erm} gives sufficiency. Its rounding
term \(T/r\) is absorbed in the stated accuracy range. \Halmos
\end{proof}
The factor \(1/r\) counts all prescribed logs, including discarded ones.
The benchmark also controls truncated values, but the original-cost offset
\(\sum_{s=t}^T p_s\mathbb E[D_s-W_s]\) remains unknown, as illustrated by
Proposition~\ref{prop:censored-value-nonidentification}.

\subsection{Checking Coverage from the Same Archive}
\begin{proposition}[Observable coverage test]
\label{prop:coverage certificate}
{Fix \(0<\delta<1\) and caps as in Section~\ref{section:decision-coverage}; condition on the
complete boundary array. Use the qualified observations, with \(m_t\geq1\),
and let
\[
\mathcal J_t=\{j:b_t^j\geq a_t\},\quad m_t=|\mathcal J_t|,\quad
\widehat F_t^-(a_t)=\frac1{m_t}\sum_{j\in\mathcal J_t}
\mathbf{1}\{Z_t^j<a_t\}.
\]
Because \(b_t^j\geq a_t\), the displayed indicator equals
\(\mathbf{1}\{D_t^j<a_t\}\). Define
\[
r_t=\sqrt{\frac{\log(2T/\delta)}{2m_t}}.
\]
With probability at least \(1-\delta\), passing the test
\(\widehat F_t^-(a_t)-r_t>q_t\) in every period implies
\eqref{eq:coverage-condition}. Moreover, for margins \(\gamma_t>0\), if
\(F_t^-(a_t)\geq q_t+\gamma_t\) and
\(m_t>2\gamma_t^{-2}\log(2T/\delta)\), the test passes
simultaneously with probability at least \(1-\delta\).}
\end{proposition}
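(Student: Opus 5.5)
The argument is a two-sided Hoeffding bound applied periodwise, conditionally on the boundary array \(\mathcal B\), followed by a union bound over the \(T\) periods.

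\textbf{Step 1: the qualified indicators are IID Bernoulli.} Fix \(t\). Under conditional noninformative censoring, given \(\mathcal B\), the latent \(D_t^j\) for \(j\in\mathcal J_t\) are independent with law \(P_t\). The caps are selected using only \(\mathcal B\) or an admissible pilot, so \(\mathcal J_t\) and \(m_t\) are \(\mathcal B\)-measurable. For \(j\in\mathcal J_t\) we have \(b_t^j\geq a_t\), so \(Z_t^j<a_t\) holds iff \(D_t^j<a_t\): if \(D_t^j<a_t\leq b_t^j\) then \(Z_t^j=D_t^j\), and otherwise \(Z_t^j\geq a_t\). Hence \(\widehat F_t^-(a_t)\) is, conditionally, an average of \(m_t\) IID \(\operatorname{Bernoulli}(F_t^-(a_t))\) variables.

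\textbf{Step 2: a simultaneous confidence event.} By Hoeffding, \(\mathbb P(|\widehat F_t^-(a_t)-F_t^-(a_t)|>r_t\mid\mathcal B)\leq2\exp(-2m_tr_t^2)=\delta/T\). A union bound over \(t\in[T]\) gives the event
\[
E=\bigl\{|\widehat F_t^-(a_t)-F_t^-(a_t)|\leq r_t\ \hbox{for all}\ t\bigr\},
\]
with conditional probability at least \(1-\delta\).

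\textbf{Step 3: soundness.} On \(E\), passing the test in every period gives \(F_t^-(a_t)\geq\widehat F_t^-(a_t)-r_t>q_t\) for all \(t\), which is \eqref{eq:coverage-condition}.

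\textbf{Step 4: power.} Under the margin hypothesis, \(m_t>2\gamma_t^{-2}\log(2T/\delta)\) implies \(r_t<\gamma_t/2\). On \(E\),
\[
\widehat F_t^-(a_t)-r_t\geq F_t^-(a_t)-2r_t>q_t+\gamma_t-\gamma_t=q_t
\]
in every period, so the test passes simultaneously on \(E\), with probability at least \(1-\delta\).

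The main point requiring care is Step 1, specifically that boundary-only selection preserves the IID Bernoulli structure. This is what allows \(m_t\) to be treated as fixed inside Hoeffding's inequality. The remaining steps are routine.
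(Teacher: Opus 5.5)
Your proposal is correct and matches the paper's proof: a conditional two-sided Hoeffding bound in each period, a union bound over the \(T\) periods, soundness read off directly on the simultaneous event, and power from \(r_t<\gamma_t/2\). Your Step~1, which checks that \(\mathbf 1\{Z_t^j<a_t\}=\mathbf 1\{D_t^j<a_t\}\) and that \(\mathcal J_t\) is determined by the boundaries, spells out details the paper leaves implicit.
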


\begin{proof}{Proof.}
{Apply Hoeffding's inequality to each Bernoulli indicator and union
bound over periods, conditional on the boundaries. On the resulting event,
\(|\widehat F_t^-(a_t)-F_t^-(a_t)|\leq r_t\) for all \(t\). The first claim
is immediate. Under the margin condition, the sample-size requirement gives
\(r_t<\gamma_t/2\), so
\(\widehat F_t^-(a_t)-r_t\geq F_t^-(a_t)-2r_t>q_t\). \Halmos}
\end{proof}

The usable fraction counts logs reaching a cap, and the margin measures quantile
separation. This sufficient test is not necessary for identification.
Its observations can also fit the policy.

\begin{corollary}[Shared-data coverage checking and policy learning]
\label{cor:shared-data-learning}
Fix \(0<\delta<1\), \(x_1\in[0,a_1]\), and the caps and sampling
protocol of Theorem~\ref{theorem:coverage product erm}, without assuming
coverage. Apply Proposition~\ref{prop:coverage certificate}
to the same qualified observations, with failure budget \(\delta/2\),
and fit Product ERM using a boundary-selected subset satisfying that theorem's
count with learning budget \(\delta/2\). Return the fitted policy only if
the test passes. Then, conditional on the boundaries,
\[
\mathbb P\!\left(\text{test passes and }
V_1^{\widehat\pi^\times}(x_1)-V_1^*(x_1)>\epsilon\right)\leq\delta.
\]
If \(F_t^-(a_t)\geq q_t+\gamma_t\) with \(\gamma_t>0\) for all \(t\), a sufficiently
large universal constant \(C'\) makes the per-period requirements
\begin{equation}
\label{eq:shared-data-count}
m_t\geq C'\max\!\left\{
\frac{C_c^2\bar D^2T^2}{\epsilon^2}
\left(1+\log\frac2\delta\right),\quad
\gamma_t^{-2}\log\frac{4T}{\delta}\right\}
\end{equation}
sufficient for passing and returning an \(\epsilon\)-optimal policy
simultaneously with probability at least \(1-\delta\). The maximum reflects
sample reuse; it does not authorize choosing caps from the same demands.
For \(T\geq2\), \(0<\epsilon\leq T\), constant confidence and
\(C_c=\bar D=1\), usable fraction at least \(r\) gives a sufficient
raw total \(O(r^{-1}\sum_{t=1}^T
\max\{T^2\epsilon^{-2},\gamma_t^{-2}\log T\})\) with period-specific
counts.
\end{corollary}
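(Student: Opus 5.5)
The plan is to intersect two conditional high-probability events, both defined given the complete boundary array \(\mathcal B\). Let \(E_{\rm test}\) be the Hoeffding event of Proposition~\ref{prop:coverage certificate} run with failure budget \(\delta/2\): on it, \(|\widehat F_t^-(a_t)-F_t^-(a_t)|\leq r_t\) for every \(t\), where now \(r_t=\sqrt{\log(4T/\delta)/(2m_t)}\). Let \(E_{\rm fit}\) be the Product ERM uniform-deviation event from the proof of Theorem~\ref{theorem:coverage product erm} with learning budget \(\delta/2\). Each event has conditional probability at least \(1-\delta/2\) \emph{whenever its hypotheses hold}, and this does not require the two data sets to be independent; a union bound only needs the marginal probabilities. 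Since both procedures select indices using only \(\mathcal B\), the qualified variables \(W_t^j\) remain conditionally IID within periods and independent across periods. So each lemma applies to the shared data without any sample splitting.

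For the first claim I split on whether the unknown instance satisfies \eqref{eq:coverage-condition}. If coverage fails, then on \(E_{\rm test}\) the test cannot pass: passing would give \(F_t^-(a_t)\geq\widehat F_t^-(a_t)-r_t>q_t\) for all \(t\). Hence the bad event is contained in \(E_{\rm test}^c\), of probability at most \(\delta/2\leq\delta\). If coverage holds, Theorem~\ref{theorem:coverage product erm} applies with budget \(\delta/2\). The bad event then lies in \(E_{\rm fit}^c\), again of probability at most \(\delta/2\).

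There is one subtlety, which I expect to be the main obstacle. Theorem~\ref{theorem:coverage product erm} states its sample requirement as a fixed \(n\), whereas the corollary's first claim allows any boundary-selected subset "satisfying that theorem's count." I will interpret this as \(n\) meeting the theorem's bound with \(\log(2/\delta)\) in place of \(\log(1/\delta)\). The truncation reduction of Proposition~\ref{prop:exact-truncation} is used only inside the coverage-holds branch, so no circularity arises.

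For the second claim, assume \(F_t^-(a_t)\geq q_t+\gamma_t\), which in particular implies \eqref{eq:coverage-condition}. Choose \(C'\) large enough that \eqref{eq:shared-data-count} guarantees two things. First, \(m_t>2\gamma_t^{-2}\log(4T/\delta)\), so the margin part of Proposition~\ref{prop:coverage certificate} at budget \(\delta/2\) gives passing on \(E_{\rm test}\). Second, \(m_t\geq n\) with \(n=\lceil C C_c^2\bar D^2T^2\epsilon^{-2}(1+\log(2/\delta))\rceil\). Any extra \(+1\) from rounding is absorbed into \(C'\) using \(1+\log(2/\delta)\geq1\), so selecting \(n\) indices is possible. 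On \(E_{\rm test}\cap E_{\rm fit}\), which has probability at least \(1-\delta\), the test passes and the returned policy is \(\epsilon\)-optimal.

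For the raw total, take \(C_c=\bar D=1\) and constant \(\delta\). The per-period requirement is then \(m_t=O(\max\{T^2\epsilon^{-2},\gamma_t^{-2}\log T\})\), using \(\log(4T/\delta)=O(\log T)\) for \(T\geq2\). Setting \(N_t=\lceil m_t/r\rceil\) makes \(m_t\geq rN_t\) achievable with period-specific counts. The rounding cost \(T\) is absorbed because \(\epsilon\leq T\) gives \(T^2\epsilon^{-2}\geq1\). Summing over \(t\) yields \(O(r^{-1}\sum_t\max\{T^2\epsilon^{-2},\gamma_t^{-2}\log T\})\).
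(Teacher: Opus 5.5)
Your proposal is correct and follows essentially the same argument as the paper. Both use the Hoeffding CDF event and the truncated uniform-deviation event, each at budget \(\delta/2\), combined by a union bound that needs no independence; passing forces strict coverage, coverage licenses the truncation transfer, and the margin condition guarantees passing. The only difference is bookkeeping: you split on whether the boundary-measurable coverage condition holds, which gives \(\delta/2\) for the first claim and never uses \(E_{\rm fit}\) without coverage, whereas the paper works directly on \(E_{\rm cov}\cap E_{\rm fit}\), noting that \(E_{\rm fit}\) holds regardless of coverage.
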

\begin{proof}{Proof.}
Conditional on the boundaries, let \(E_{\rm cov}\) be the simultaneous
CDF event and \(E_{\rm fit}\) the truncated uniform-deviation event,
each with failure probability at most \(\delta/2\). Bounded independent
truncated demand ensures \(E_{\rm fit}\) without coverage, and its support
keeps empirical optimal actions below the nondecreasing caps. A union bound gives
\(\mathbb P(E_{\rm cov}\cap E_{\rm fit})\geq1-\delta\), without
event independence. On this intersection, passing implies strict coverage
and a cap-feasible true optimum, so truncation transfers the regret bound.
Passing with excess gap therefore occurs only off this intersection.
With positive margins, \eqref{eq:shared-data-count} ensures passing on
\(E_{\rm cov}\) by Proposition~\ref{prop:coverage certificate}.
Divide each period's usable count by \(r\) for the raw total, and the stated
accuracy range absorbs rounding. \Halmos
\end{proof}

\section{Concluding Remarks}
\label{section: conclusion}
We conclude that policy learning and valuation face different information limits for multistage stochastic inventory control. Inherited inventory can make valuation harder, and censoring can leave it unidentified even when carry-safe truncation attains the policy-learning limit. Demand-adaptive logging, positive lead times, and necessary multiperiod coverage conditions
are natural next steps for future studies.

\section*{Data and Code Accessibility}
The synthetic experiment source, fixed configurations, seeds, checks, and
saved figure inputs are stored in the supplementary files.
The source guide identifies the scripts and records used by the two
companion figures. The synthetic inputs are generated by the scripts. No proprietary data are needed for these illustrations.
\clearpage

\clearpage
\ECSwitch
\ECHead{E-Companion for ``Information Limits of Multistage Inventory Control: Learning, Valuation, and Censoring''}
\section{Supplementary Illustrations}
\label{section: simulation}
This section contains numerical illustrations of known-policy valuation
and boundary-qualified truncation. They are implementation and teaching
examples, not independent evidence for minimax rates or algorithmic
superiority. Scripts, fixed configurations, seeds, and figure inputs are
available in supplementary files.

\subsection{Valuation When the Optimal Policy Is Known}
For the stationary family of Theorem
\ref{theorem:stationary value lower bound}, take unit costs, initial stock
\(x_1=1\), and \(\rho=\mathbb P(D=0)=1-1/(2T)\), with demand one
otherwise. Supply the optimal threshold zero, so policy error is exactly
zero. From \(M\) pooled observations, let \(Z\) count zero demands and
estimate the policy value by \(v_T(Z/M)\), where
\[
v_T(u)=\sum_{k=0}^{T-1}\bigl[(1-u)+(2u-1)u^k\bigr].
\]
This formula evaluates the supplied policy even if \(Z/M<1/2\).
We use \(T\in\{10,20,40,80\}\),
\(M=\max\{2,\operatorname{round}(sT^3)\}\),
\(s\in\{.001,.004,.016,.064,.256\}\), and 20,000 replications per cell.

Figure~\ref{fig:valuation} shows nonzero value-estimation error despite the
known optimal decision rule. At larger budgets, RMSE approaches the local
sensitivity approximation \(v_T'(\rho)\sqrt{\rho(1-\rho)/M}\); small
budgets show finite-sample departures. The comparison isolates uncertainty
in the value of carried inventory, not an algorithm ranking. The horizontal
normalization by \(T^3\) is motivated by the theorem, not fitted to the data.
Theorem~\ref{theorem:stationary value lower bound}, not the nonzero error of
this particular estimator, establishes valuation difficulty. Supplying the
policy does not itself demonstrate the class-wide policy--value comparison.
Indeed, since \(Z\sim\operatorname{Bin}(M,\rho)\), the risk is exactly
\[
\mathbb E\bigl[(v_T(Z/M)-v_T(\rho))^2\bigr]
=\sum_{z=0}^M\binom Mz\rho^z(1-\rho)^{M-z}
\bigl[v_T(z/M)-v_T(\rho)\bigr]^2.
\]
The archived Monte Carlo curve approximates this finite sum.

\begin{figure}[htbp]
\centering
\includegraphics[width=\linewidth]{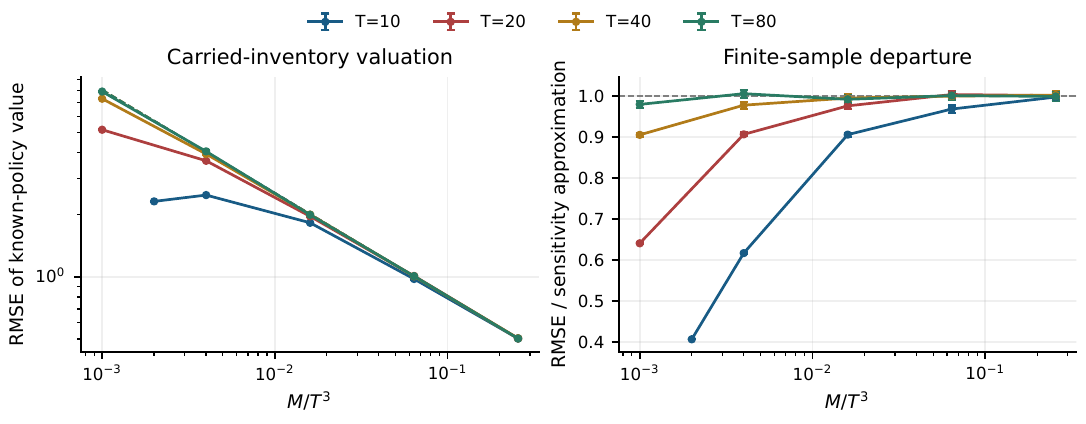}
\caption{Known-policy valuation. Left: RMSE and the sensitivity approximation
(dashed). Right: their ratio. Bars show approximate 95\% Monte Carlo intervals
for RMSE, obtained from the standard error of squared error and the delta
method; they are not confidence bounds for an individual value estimate.}
\label{fig:valuation}
\end{figure}

\subsection{Exact Truncation and Boundary-Qualified Learning}
Set \(T=20\), unit costs, \(x_1=0\), and cap \(a_t=7/8\).
Odd-period demand is \(1/4\) or \(1/2\), with upper-point probability
\(1/2+0.05\theta_i\) and alternating signs \(\theta_i\). Even-period
demand is \(3/4\) with probability \(3/4\), and one otherwise.
Every period satisfies strict decision coverage. For each raw per-period
budget \(B\in\{64,256,1024,4096\}\) and
\(r\in\{1,1/2,1/4,1/8\}\), the boundary schedule selects exactly
\(m=rB\) indices in every period by an independent random permutation,
assigns boundary \(7/8\) to those indices, and assigns \(1/8\) to the
remaining \((1-r)B\) indices. Selection is independent of the demand
values; the counts are fixed, not binomial. Each of 400 replications fits
Product ERM using the \(rB\) qualified observations. The boundary-blind
DP fits all sales as if they were fully observed demand. It is a
misspecification diagnostic, not a competitive censored-demand learner.

All reported percentages use the original-demand optimum:
\begin{equation}
\label{eq:ec-original-gap}
\operatorname{Gap}(\widehat\pi)
=100\,\frac{V_{1,D}^{\widehat\pi}(0)-V_{1,D}^*(0)}{V_{1,D}^*(0)}.
\end{equation}
Here \(V_{1,D}^*(0)=1.75\), whereas \(V_{1,W}^*(0)=1.4375\).
The finite demand and inventory lattice permits policy evaluation by
finite summation, without test Monte Carlo. In the 6,400 archived
replications, the \emph{absolute} gaps
\(V_{1,D}^{\widehat\pi}(0)-V_{1,D}^*(0)\) and
\(V_{1,W}^{\widehat\pi}(0)-V_{1,W}^*(0)\) agree within
\(1.2\times10^{-15}\). The omitted cost is the policy-independent
constant \(T/64=0.3125\), as predicted by
\eqref{eq:truncation-constant}. This identity preserves absolute gaps;
percentages agree only when the same denominator is used.

Figure~\ref{fig:truncation} distinguishes raw and qualified budgets.
For \(r=1/8\), the qualified learner's mean gap falls from about
6.5\% to 0.15\% across the displayed budgets. At equal qualified counts,
the learner receives statistically equivalent inputs, so alignment by
\(rB\) is built into the design. The universal \(1/r\) lower bound
comes from Theorem~\ref{theorem:coverage lower bound}, not this alignment.
Gap agreement checks the truncation identity in
Proposition~\ref{prop:exact-truncation}.

\paragraph{An exact explanation of the large plateau.}
Every low-boundary observation equals \(1/8\), since true demand is
always at least \(1/4\). Thus the boundary-blind empirical marginal is
\[
\widehat P_t^{\rm sales}=(1-r)\delta_{1/8}
+r\widehat P_t^{\rm qualified}.
\]
For \(r<1/2\), its unique myopic minimizer is \(1/8\). Ordering to
\(1/8\) in every period attains all empirical stagewise minima and
leaves zero inventory, so it is also dynamically optimal from zero,
independently of the qualified realizations. Under the true law, the ten
odd--even pairs give
\[
\begin{split}
V_{1,D}^*(0)&=10\left(\frac9{80}+\frac1{16}\right)=1.75,\\
V_{1,D}^{\pi_{1/8}}(0)&=10\left(\frac38-\frac18\right)
+10\left(\frac{13}{16}-\frac18\right)=9.375.
\end{split}
\]
The five positive and five negative odd-period signs average to odd-period
mean demand \(3/8\). The absolute excess cost is therefore \(7.625\),
and the exact relative gap is
\[
100\,\frac{9.375-1.75}{1.75}=435.7142857\ldots\%.
\]
The coincident \(r=1/4\) and \(r=1/8\) plateaus are predicted by
the construction, rather than a finite-sample phenomenon. A relative cost
gap can exceed 100\%; here the policy costs about 5.36 times the optimum.

\paragraph{Half coverage and optimizer selection.}
At \(r=1/2\), exactly half the observations equal \(1/8\), and the
empirical objectives admit ties. Exact smallest-minimizer selection would
again give the all-\(1/8\) policy and the same 435.7142857\% gap at
every budget. The archived implementation uses the \(1/40\) action
lattice and double-precision Bellman objectives. Its routine scans feasible actions in descending order and
replaces the incumbent when \(Q_t(y)\leq Q_t(y_{\rm best})\), with no
tie tolerance. It therefore selects the smallest action among exactly equal
\emph{computed} objectives; roundoff can distinguish mathematically tied
actions. The plotted \(r=1/2\) curve retains that numerical convention
and is not the exact smallest-minimizer curve. Its lower true cost is
sensitive to optimizer selection, not evidence of a different learning rate.

A separate controlled audit with 20 replications per cell
confirms the distinction:
for one half-coverage realization at \(B=64\), its actions from zero
include \(0.15\), \(0.175\), \(0.2\), and \(0.7\); an exact
rational Bellman calculation verifies that every returned action from zero
is optimal for that empirical problem, while the smallest such action is
\(1/8\) in every period. This audit is not a rerun of the archived
6,400 replications. At \(r=1\), the shared sales and qualified samples
have identical empirical marginals. The same deterministic solver therefore
returns identical policies and gaps replication by replication; this
identity also holds in the controlled audit.

\begin{figure}[htbp]
\centering
\includegraphics[width=\linewidth]{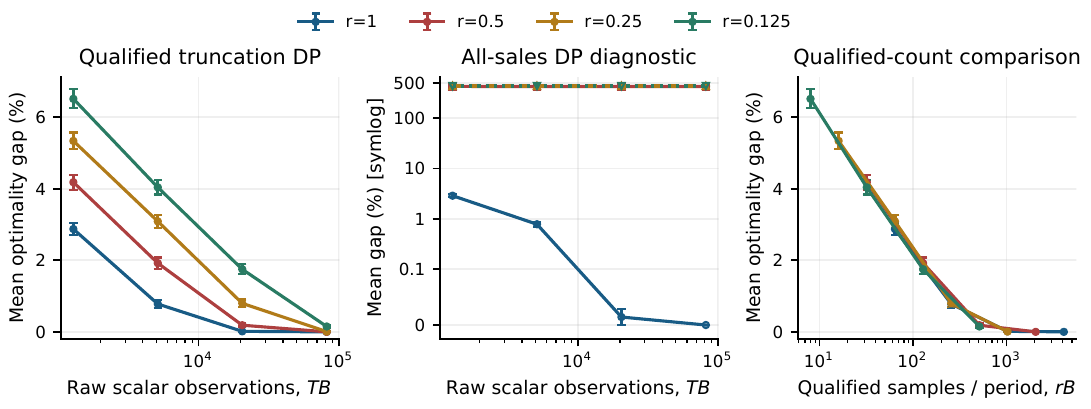}
\caption{Boundary-qualified Product ERM and boundary-blind DP
(middle panel: misspecification diagnostic, labeled ``All-sales DP diagnostic'').
The first two panels count all raw observations \(TB\); the third counts
qualified observations per period \(rB\). Gaps use the original optimum
\(1.75\) in \eqref{eq:ec-original-gap}. Bars are 95\% Student-$t$
intervals for the mean over 400 independent replications, not policy-level
guarantees. The middle panel uses a symmetric-log vertical scale, linear
within 0.1 percentage points of zero. The exact plateau for \(r<1/2\)
is 435.714\%; the \(r=1/4\) and \(r=1/8\) curves overlap. The
archived \(r=1/2\) curve uses floating-point comparisons without a tie
tolerance, as explained above. At \(r=1\), both methods coincide.}
\label{fig:truncation}
\end{figure}

\paragraph{An exact coverage-check calculation.}
Set \(T=20\), \(q_t=1/2\), cap \(7/8\), and
\(\mathbb P(D_t=1/4)=1/2+\gamma\), with remaining mass at one.
Apply Proposition~\ref{prop:coverage certificate} at \(\delta=0.05\)
to independent observations within and across periods. For \(m\) usable observations per
period, the exact simultaneous pass probability is
\[
\left[\mathbb P\!\left\{\operatorname{Bin}(m,1/2+\gamma)
>m\left(1/2+\sqrt{\frac{\log(2T/\delta)}{2m}}\right)\right\}\right]^T.
\]
No simulation is needed for this calculation. The proposition's sufficient
usable counts per period are
335, 1,337 and 5,348 for margins 0.2, 0.1 and 0.05. Halving the margin
requires about four times as many usable observations.
\end{document}